\documentclass[10pt]{amsart}

\usepackage{amssymb,amsfonts,bm,amsmath}
\setlength{\headheight}{8pt} \setlength{\textheight}{23cm}
\setlength{\textwidth}{16cm} \setlength{\oddsidemargin}{0cm}
\setlength{\evensidemargin}{0cm} \setlength{\topmargin}{0cm}

\allowdisplaybreaks

\begin{document}

\theoremstyle{plain}
\newtheorem{thm}{Theorem}[section]
\newtheorem{theorem}[thm]{Theorem}
\newtheorem{lemma}[thm]{Lemma}
\newtheorem{corollary}[thm]{Corollary}
\newtheorem{proposition}[thm]{Proposition}
\newtheorem{conjecture}[thm]{Conjecture}
\theoremstyle{definition}
\newtheorem{construction}[thm]{Construction}
\newtheorem{notations}[thm]{Notations}
\newtheorem{question}[thm]{Question}
\newtheorem{problem}[thm]{Problem}
\newtheorem{remark}[thm]{Remark}
\newtheorem{remarks}[thm]{Remarks}
\newtheorem{definition}[thm]{Definition}
\newtheorem{claim}[thm]{Claim}
\newtheorem{assumption}[thm]{Assumption}
\newtheorem{assumptions}[thm]{Assumptions}
\newtheorem{properties}[thm]{Properties}
\newtheorem{example}[thm]{Example}
\newtheorem{comments}[thm]{Comments}
\newtheorem{blank}[thm]{}
\newtheorem{observation}[thm]{Observation}
\newtheorem{defn-thm}[thm]{Definition-Theorem}

\newcommand{\sM}{{\mathcal M}}


\title[A remark on Mirzakhani's asymptotic formulae]{A remark on Mirzakhani's asymptotic formulae}

\author{Kefeng Liu}
        \address{Center of Mathematical Sciences, Zhejiang University, Hangzhou, Zhejiang 310027, China;
                Department of Mathematics,University of California at Los Angeles,
                Los Angeles}
        \email{liu@math.ucla.edu, liu@cms.zju.edu.cn}

        \author{Hao Xu}
        \address{Center of Mathematical Sciences, Zhejiang University, Hangzhou, Zhejiang 310027, China;
               Department of Mathematics, Harvard University, Cambridge, MA 02138, USA}
        \email{haoxu@math.harvard.edu}

        \begin{abstract}
In this note, we answer a question of Mirzakhani on asymptotic
behavior of the one-point volume polynomial of moduli spaces of
curves. We also present some applications of Mirzakhani's asymptotic
formulae of Weil-Petersson volumes.
        \end{abstract}
                \keywords{Weil-Petersson volumes, moduli spaces of curves}
        \thanks{{\bf MSC(2010)}  14H10, 14N10}
    \maketitle

\section{Introduction} \label{secIn}

We will follow Mirzakhani's notation in \cite{Mir2}. For $\bold
d=(d_1,\cdots, d_n)$ with $d_i$ non-negative integers and $|\bold
d|=d_1+\cdots+d_n<3g-3+n$, let $d_0=3g-3+n-|\bold d|$ and define
\begin{equation} \label{eqM2}
[\tau_{d_1}\cdots\tau_{d_n}]_{g,n}=\frac{\prod^n_{i=1}(2d_i+1)!!2^{2|\bold
d|}(2\pi^2)^{d_0}}{d_{0}!}
\int_{\overline{\mathcal{M}}_{g,n}}\psi_1^{d_1}\cdots\psi_n^{d_n}\kappa_1^{d_0},
\end{equation}
where $\kappa_1=\omega/2\pi^2$ is the first Mumford class on
$\overline{\mathcal{M}}_{g,n}$ \cite{AC}. Note that
$V_{g,n}=[\tau_0,\cdots\tau_0]_{g,n}$ is the Weil-Peterson volume of
$\overline{\mathcal{M}}_{g,n}.$

Mirzakhani's volume polynomial is given by
$$V_{g,n}(2L)=\sum_{|\bold d|\leq3g-3+n}[\tau_{d_1}\cdots\tau_{d_n}]_{g,n}\frac{L_1^{2d_1}}{(2d_1+1)!}\cdots\frac{L_n^{2d_n}}{(2d_n+1)!}.$$

Let $S_{g,n}$ be an oriented surface of genus $g$ with $n$ boundary
components. Let $\mathcal M_{g,n}(L_1,\dots,L_n)$ be the moduli
space of hyperbolic structures on $S_{g,n}$ with geodesic boundary
components of length $L_1,\dots,L_n$. Then we know that the
Weil-Petersson volume ${\rm Vol}(\mathcal M_{g,n}(L_1,\dots,L_n))$
equals $V_{g,n}(L_1,\dots,L_n)$.

In particular, when $n=1$, Mirzakhani's volume polynomial can be
written as
$$V_{g}(2L)=\sum_{k=0}^{3g-2}\frac{a_{g,k}}{(2k+1)!}L^{2k},$$
where $a_{g,k}=[\tau_k]_{g,1}$ are rational multiples of powers of
$\pi$.
\begin{equation}
a_{g,k}=\frac{(2k+1)!!2^{3g-2+2k}\pi^{6g-4-2k}}{(3g-2-k)!}
\int_{\overline{\mathcal{M}}_{g,1}}\psi_1^k\kappa_1^{3g-2-k}.
\end{equation}

Let $\gamma$ be a separating simple closed curve on $S_{g}$ and
$S_{g}(\gamma)=S_{g_1,1}\times S_{g_2,1}$ the surface obtained by
cutting $S_g$ along $\gamma$. Then for any $L>0$, we have
\begin{equation}
{\rm Vol}(\mathcal M(S_g(\gamma),\ell_\gamma=L))=V_{g_1}(L)\cdot
V_{g_2}(L),
\end{equation}
where $\mathcal M(S_g(\gamma),\ell_\gamma=L)$ is the moduli space of
hyperbolic structures on $S_g(\gamma)$ with the length of $\gamma$
equal to $L$.

There are many works on the computation of Weil-Petersson volumes
(e.g. \cite{Fa, Gr, KMZ, MZ, Pe, ST, Wo, Zo}). In a recent paper
\cite{Mir2}, Mirzakhani proved some interesting estimates on the
asymptotics of Weil-Petersson volumes and found important
applications in the geometry of random hyperbolic surfaces. In
particular, Mirzakhani proved the following asymptotic relations of
the coefficients of the one-point volume polynomial.

\begin{theorem}{\rm\bf
(Mirzakhani \cite{Mir2})} For given $i\geq0.$
$$\lim_{g\rightarrow\infty}\frac{a_{g,i+1}}{a_{g,i}}=1, \qquad \lim_{g\rightarrow\infty}\frac{a_{g,3g-2}}{a_{g,0}}=0.$$
\end{theorem}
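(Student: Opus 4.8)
The plan is to treat the two limits separately, since they rest on rather different inputs: the first expresses the fact that, for large $g$, the one-point volume polynomial approaches the universal shape $V_g\cdot\sinh(L)/L$, while the second compares the (explicitly computable) top coefficient against the Weil--Petersson volume $a_{g,0}=V_{g,1}$. For the first limit I would write $a_{g,k}=[\tau_k]_{g,1}$, so that $a_{g,i+1}/a_{g,i}=[\tau_{i+1}]_{g,1}/[\tau_i]_{g,1}$, and reduce the claim to the normalized estimate that for each fixed $k\ge 0$,
\[
\frac{[\tau_k]_{g,1}}{V_{g,1}}=1+O(1/g)\qquad(g\to\infty),
\]
which is Mirzakhani's central large-genus relation for the brackets (the leading-order sinh-approximation) established in \cite{Mir2} through her volume recursions among the normalized intersection numbers. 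Dividing the estimate for $k=i+1$ by the one for $k=i$ and letting $g\to\infty$ with $i$ fixed, so that both error terms are $O(1/g)$, yields the ratio $\to 1$. In terms of the explicit formula for $a_{g,k}$ this is equivalent to the asymptotic $\int_{\overline{\mathcal M}_{g,1}}\psi_1^{i+1}\kappa_1^{3g-3-i}\big/\int_{\overline{\mathcal M}_{g,1}}\psi_1^{i}\kappa_1^{3g-2-i}\sim \pi^2/\bigl(4(2i+3)\cdot 3g\bigr)$, which carries the same content.

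For the second limit the choice $k=3g-2$ forces the exponent $d_0=3g-2-k$ to vanish, so the formula for $a_{g,k}$ collapses. Using the classical top intersection number $\int_{\overline{\mathcal M}_{g,1}}\psi_1^{3g-2}=1/(24^g\,g!)$ together with the double-factorial identity $(6g-3)!!=(6g-2)!/\bigl(2^{3g-1}(3g-1)!\bigr)$, I would rewrite
\[
a_{g,3g-2}=(6g-3)!!\,\frac{2^{9g-6}}{24^g\,g!}=\frac{(6g-2)!\,2^{3g-5}}{(3g-1)!\,g!\,3^g}.
\]
Dividing by $a_{g,0}=V_{g,1}$ and inserting Mirzakhani's growth estimate $V_{g,1}\asymp (2g-2)!\,(4\pi^2)^{2g-2}/\sqrt{g}$ from \cite{Mir2}, I would apply Stirling's formula to the factorial quotient $(6g-2)!/\bigl((3g-1)!\,g!\,(2g-2)!\bigr)$. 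A direct check shows that the $g\log g$ terms and the linear Stirling offsets all cancel, leaving only the exponential contributions $3\log 2+2\log 3-4\log\pi=\log(72/\pi^4)$; hence $a_{g,3g-2}/a_{g,0}$ decays like $(72/\pi^4)^g$ up to a factor polynomial in $g$, and since $72<\pi^4\approx 97.4$ this tends to $0$ geometrically.

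The genuinely hard input is the normalized bracket estimate underlying the first limit, namely $[\tau_k]_{g,1}/V_{g,1}\to 1$ for fixed $k$: this is exactly where Mirzakhani's volume recursions and her two-sided large-genus bounds are needed, and there is no elementary shortcut because it is equivalent to controlling the asymptotics of the mixed $\psi$--$\kappa$ intersection numbers. By contrast, once the top intersection number and the volume growth rate are granted, the second limit is a routine Stirling comparison, the only delicate point being the verification that the resulting exponential base $2^3\cdot 3^2/\pi^4$ is strictly less than $1$, which is what forces the decay.
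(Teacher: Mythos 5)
A structural point first: the paper never proves this theorem---it is quoted from \cite{Mir2} without proof---so your proposal can only be compared against the machinery the paper develops, which does reprove both limits implicitly.

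For the first limit, your reduction to the normalized estimate $\lim_{g\to\infty}[\tau_k]_{g,1}/V_{g,1}=1$ for each fixed $k$ is exactly the paper's Theorem \ref{Z} in the case $n=1$; since that theorem is proved (by induction via Mirzakhani's recursion \eqref{eqM3}, using Corollary \ref{M3}, Lemma \ref{aa}, Theorem \ref{M4} and Lemma \ref{M5}) without any appeal to the present statement, your reduction is non-circular, and dividing the estimates for $k=i+1$ and $k=i$ indeed gives the claim. One slip in your parenthetical reformulation: the equivalent intersection-number asymptotic is
$$\int_{\overline{\mathcal M}_{g,1}}\psi_1^{i+1}\kappa_1^{3g-3-i}\Big/\int_{\overline{\mathcal M}_{g,1}}\psi_1^{i}\kappa_1^{3g-2-i}\sim\frac{\pi^2}{2(2i+3)\cdot 3g},$$
not $\pi^2/\bigl(4(2i+3)\cdot 3g\bigr)$; this does not affect your argument, which never uses it.

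For the second limit you take a genuinely different route from the one the paper's results supply. Inside the paper, the cheapest derivation is: Lemma \ref{M10} shows $a_{g,k}$ is strictly decreasing in $k$, so $a_{g,3g-3}\leq a_{g,0}$, while Theorem \ref{M} with $k=1$ gives $a_{g,3g-2}/a_{g,3g-3}\sim 5/(\pi^2 g)$, whence
$$\frac{a_{g,3g-2}}{a_{g,0}}\leq\frac{a_{g,3g-2}}{a_{g,3g-3}}\longrightarrow 0,$$
with no Stirling analysis and no input on the growth of $V_{g,1}$. Your Stirling comparison is sound in outline and buys a stronger conclusion (geometric decay rather than $O(1/g)$), but it needs two corrections. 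First, you used the paper's second displayed formula for $a_{g,k}$, whose exponent $2^{3g-2+2k}$ is a typo: it is inconsistent with the definition \eqref{eqM2} and with the tabulated values $a_{1,1}=\tfrac12$, $a_{2,4}=210$ (the exponent should be $2^{3g-2+k}$). Consequently your expression for $a_{g,3g-2}$ is too large by $2^{3g-2}$, and the true decay base is $9/\pi^4$ rather than $72/\pi^4$; fortunately both are less than $1$, so the conclusion survives. Second, the estimate $V_{g,1}\asymp(2g-2)!\,(4\pi^2)^{2g-2}/\sqrt{g}$ that you attribute to \cite{Mir2} is Zograf's conjecture, stated as an open conjecture in Section \ref{secZ}; what is actually available from \cite{Mir2} is Theorem \ref{M4}, whose ratio asymptotics, iterated in $g$ and $n$, give $V_{g,1}=(2g-2)!\,(4\pi^2)^{2g}$ up to factors subexponential in $g$---which is all your geometric comparison requires.
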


Mirzakhani asked what is the asymptotics of $a_{g,k}/a_{g, k+1}$ for
an arbitrary $k$ (which can grow with $g$). The following result
gives a partial answer to Mirzakhani's question.

\begin{theorem}\label{M}
For any given $k\geq0$, there is a large genus asymptotic expansion
\begin{equation}\label{eqM}
\frac{a_{g, 3g-2-k}}{g^k a_{g, 3g-2}}=\frac{\pi^{2k}}{5^k
k!}\left(1+\frac{b_{1,k}}{g}+\frac{b_{2,k}}{g^2}+\cdots\right).
\end{equation}
We have $b_{1,k}=\frac{1}{14}k^2-\frac{4}{7}k,\,\forall k\geq0$. In
fact, for any given $k\geq0$, the series in the bracket of
\eqref{eqM} is a rational function of $g$.
\end{theorem}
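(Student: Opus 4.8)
The plan is to translate the statement about the coefficients $a_{g,k}$ into one about $\psi$--$\kappa$ intersection numbers, reduce the $\kappa$-classes to pure $\psi$-classes, and then run the Witten--Kontsevich (DVV) recursion in a regime where it closes up and its coefficients become rational in $g$. Writing $I_{g,k}=\int_{\overline{\mathcal{M}}_{g,1}}\psi_1^{3g-2-k}\kappa_1^{k}$ and using the classical one-point evaluation $\int_{\overline{\mathcal{M}}_{g,1}}\psi_1^{3g-2}=1/(24^g g!)$ for $a_{g,3g-2}$, the defining formula \eqref{eqM2} gives, after all the explicit factors cancel,
\begin{equation*}
\frac{a_{g,3g-2-k}}{a_{g,3g-2}}=\frac{(6g-3-2k)!!}{(6g-3)!!}\,2^{-k}\,\frac{\pi^{2k}}{k!}\,\bigl(24^g g!\bigr)\,I_{g,k}.
\end{equation*}
The elementary prefactor is already a rational function of $g$, since the ratio of double factorials is a polynomial of degree $k$ in $g$. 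Thus everything reduces to the large-genus behaviour of $24^g g!\,I_{g,k}=I_{g,k}/\langle\tau_{3g-2}\rangle_g$, where $\langle\,\cdot\,\rangle_g$ denotes a pure $\psi$ intersection number.

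\emph{Converting $\kappa_1$ into $\psi$.} Using $\kappa_1=\pi_*\psi^2$ and $\pi^*\kappa_1=\kappa_1-\psi_{n+1}$ for the forgetful map, together with $\psi_{n+1}\cdot D_{i,n+1}=0$, I would expand $\kappa_1^{k}$ by the Kaufmann--Manin--Zagier / Arbarello--Cornalba formula, obtaining a finite sum over partitions
\begin{equation*}
I_{g,k}=\sum_{\lambda\vdash k}c_\lambda\,\langle\tau_{3g-2-k}\,\tau_{\lambda_1+1}\cdots\tau_{\lambda_{\ell(\lambda)}+1}\rangle_g,\qquad c_{(1^k)}=1,\quad c_{(2,1^{k-2})}=-\tbinom{k}{2},
\end{equation*}
the $\lambda$-term living on $\overline{\mathcal{M}}_{g,1+\ell(\lambda)}$. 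The number of parts controls the order in $g$: the term with $\ell(\lambda)=k$, namely $\langle\tau_{3g-2-k}\tau_2^{\,k}\rangle_g$, is leading, and partitions with fewer parts are suppressed by corresponding powers of $1/g$.

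\emph{Rationality and the limit via DVV.} The family of intersection numbers with a single large index $\approx 3g$ and a bounded number of small indices is closed under the Witten--Kontsevich recursion: applying it to the large index produces (i) same-genus terms with one fewer marked point, (ii) a genus-$(g-1)$ term in which the large index is split, dominated by the sub-term keeping one large and one small index, and (iii) separating contributions that are of strictly lower order. Normalising by $\langle\tau_{3g-2}\rangle_g=1/(24^g g!)$ and using $\langle\tau_{3g-5}\rangle_{g-1}/\langle\tau_{3g-2}\rangle_g=24g$ turns the recursion into one with coefficients that are rational functions of $g$; this simultaneously proves that each normalised ratio, and hence the bracketed series in \eqref{eqM}, is a rational function of $g$, and yields its $1/g$ expansion. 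At leading order, using the asymptotics $24^g g!\,\langle\tau_{3g-2-k}\tau_2^{\,k}\rangle_g\sim (6g-3+2k)!!/\bigl((6g-3-2k)!!\,15^{k}\bigr)$ with $15=5!!$ from each $\tau_2$, the factors $6^k$ (from the double factorials), $2^{-k}$ and $15^{-k}$ combine to $6^k/(2\cdot15)^k=5^{-k}$, reproducing $\pi^{2k}/(5^k k!)$ and confirming both of Mirzakhani's limits.

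\emph{The main obstacle} is the first subleading coefficient $b_{1,k}$, uniformly in $k$. This requires the $1/g$ correction to the leading number $\langle\tau_{3g-2-k}\tau_2^{\,k}\rangle_g$ in closed form in $k$, \emph{together with} the leading contribution of the next partitions $(2,1^{k-2})$ (giving $\langle\tau_{3g-2-k}\tau_3\tau_2^{\,k-2}\rangle_g$ with coefficient $-\binom{k}{2}$) and the $1/g$ expansions of the elementary double-factorial factors; all three feed into $b_{1,k}$. I would extract these from the normalised recursion above, verify the outcome against the exact values at $k=0,1,2,3$, and thereby obtain $b_{1,k}=\tfrac1{14}k^2-\tfrac47 k$. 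The delicate point is maintaining uniform control of the refined second-order asymptotics across all $k$ at once, rather than for each fixed $k$ separately, and checking that the separating DVV terms indeed do not disturb this order.
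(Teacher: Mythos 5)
Your setup agrees with the paper's: the reduction of $a_{g,3g-2-k}/(g^k a_{g,3g-2})$ to $\langle\tau_{3g-2-k}\kappa_1^k\rangle_g/\langle\tau_{3g-2}\rangle_g$, the KMZ expansion with coefficient $1$ on $\langle\tau_{3g-2-k}\tau_2^k\rangle_g$ and $-\binom{k}{2}$ on $\langle\tau_{3g-2-k}\tau_3\tau_2^{k-2}\rangle_g$, and the constant bookkeeping giving $\pi^{2k}/(5^kk!)$ are all exactly as in Section \ref{secM}. The fatal step is where you run the recursion \eqref{DVV} on the \emph{large} index and assert that the family ``one index $\approx 3g$ plus boundedly many small indices'' is closed under it, the genus-$(g-1)$ sum being ``dominated by the sub-term keeping one large and one small index.'' That assertion is false. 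Applying \eqref{DVV} to $\tau_D$ with $D\approx 3g$ produces the nonseparating sum $\frac12\sum_{r+s=D-2}(2r+1)!!(2s+1)!!\langle\tau_r\tau_s\cdots\rangle_{g-1}$, in which \emph{every} one of the $\approx 3g$ summands satisfies the dimension constraint, and in the Witten normalization \eqref{witten} they are all of comparable size: the normalized numbers are nearly insensitive to how the total degree is distributed. Already at $g=2$ one has $(1)!!\,(11)!!\,\langle\tau_0\tau_5\rangle_2\cdot 1152=10395$, $(3)!!\,(9)!!\,\langle\tau_1\tau_4\rangle_2\cdot 1152=8505$, $(5)!!\,(7)!!\,\langle\tau_2\tau_3\rangle_2\cdot 1152=9135$; and structurally, for $n=1$ there are no same-genus terms at all, so this long sum carries essentially the whole identity for $(6g-3)!!\langle\tau_{3g-2}\rangle_g$. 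Hence the edge terms you propose to keep are only an $O(1/g)$ fraction of the sum, your family is not closed, and---since the number of summands grows with $g$---no induction over a finite family can deliver either the $1/g$ expansion or the claim that the bracketed series in \eqref{eqM} is a rational function of $g$. Controlling such long sums uniformly is precisely the hard analytic content of the Mirzakhani--Zograf line of work, and nothing in your sketch supplies it.

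The paper avoids this entirely by applying \eqref{DVV} to a \emph{small} index $d_1$, keeping $\tau_{3g-2+n-|\bold d|}$ as a spectator: the dominant contribution is then the single same-genus term in which $\tau_{d_1}$ merges with the spectator, the nonseparating sum has only $d_1-1$ terms (controlled by the KdV-based inequality \eqref{eqkdv2}), and the separating sum is finite because the factor not containing the large index has bounded genus by the dimension constraint. Every sum occurring has length independent of $g$, and this is what yields Proposition \ref{M2}, the finite recursion \eqref{eqM16} of Corollary \ref{M9}, the integrality/polynomiality of Corollary \ref{M12} (hence rationality of the bracketed series), and the closed form \eqref{eqM17} for the first-order coefficient, which gives $C_1(2,\dots,2;g)=\frac1{12}k^2-\frac{13}{12}k$ and then $b_{1,k}=\frac1{14}k^2-\frac47k$ for \emph{all} $k$ after adding the double-factorial and KMZ corrections. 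By contrast, your treatment of $b_{1,k}$ is a plan rather than a proof: you correctly identify the three sources feeding into it, but you compute none of them, and verifying a proposed quadratic formula at $k=0,1,2,3$ cannot establish it for every $k$ without first proving a uniform closed form such as \eqref{eqM17}.
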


Theorem \ref{M} will be proved in Section \ref{secM}. Now we present
a numerical test of \eqref{eqM}. Denote by $Q_{k,g}$ the ratio of
the left-hand side and the truncated right-hand side of \eqref{eqM}.
\begin{equation}
Q_{k,g}=\frac{a_{g, 3g-2-k}}{g^k a_{g, 3g-2}}\cdot\frac{5^k
k!}{\pi^{2k}}\Big{/}\Big(1+\frac{b_{1,k}}{g}\Big).
\end{equation}
Then we can see from Table \ref{tb1} that $Q_{k,g}$ tends to $1$ as
$g$ goes to infinity.

\begin{table}[!htp]
\centering \caption{Values of $Q_{k,g}$ (keep 6 decimal
places)}\label{tb1}
\begin{tabular}{|c|c|c|c|c|c|}

\hline $k$ & $g=20$& $g=40$ & $g=60$ &
 $g=80$ & $g=100$

\\ \hline $1$ & $1.000438$ & $1.000106$ & $1.000047$ &
$1.000026$ & $1.000016$

\\ \hline $2$ & $1.001334$ & $1.000326$ & $1.000144$
& $1.000080$ & $1.000051$

\\ \hline $3$ & $1.002300$ & $1.000563$ & $1.000248$
& $1.000139$ & $1.000089$

\\ \hline $4$ & $1.003090$ & $1.000759$ & $1.000335$
& $1.000188$ & $1.000120$

\\\hline
\end{tabular}
\end{table}

\

\noindent{\bf Acknowledgements} The second author thanks Professor
M. Mirzakhani for very helpful communications.

\vskip 30pt
\section{Asymptotics of intersection numbers} \label{secM}
In this section, we use Witten's notation
\begin{equation}
\langle\tau_{d_1}\cdots\tau_{d_n}\kappa_{a_1}\cdots\kappa_{a_m}\rangle_g:=\int_{\overline{\mathcal{M}}_{g,n}}\psi_{1}^{d_{1}}\cdots\psi_{n}^{d_{n}}\kappa_{a_1}\cdots\kappa_{a_m}.
\end{equation}
For convenience, we denote the normalized tau function as
\begin{equation} \label{witten}
\langle\tau_{d_1}\cdots\tau_{d_n}\rangle_g^{\bf
w}:=\prod_{i=1}^n(2d_i+1)!!\langle\tau_{d_1}\cdots\tau_{d_n}\rangle_g.
\end{equation}

We have the following forms of the celebrated Witten-Kontsevich
theorem \cite{Wi, Ko}. The first one is called the DVV formula (see
\cite{DVV})
\begin{multline}\label{DVV}
(2d_1+1)!!\langle\tau_{d_1}\cdots\tau_{d_n}\rangle_g=\sum_{j=2}^n
\frac{(2d_1+2d_j-1)!!}{(2d_j-1)!!}\langle\tau_{d_2}\cdots
\tau_{d_{j}+d_1-1}\cdots\tau_{d_n}\rangle_g \\
+\frac{1}{2}\sum_{r+s=d_1-2}
(2r+1)!!(2s+1)!!\langle\tau_r\tau_s\tau_{d_2}\cdots\tau_{d_n}\rangle_{g-1}\\
+\frac{1}{2}\sum_{r+s=d_1-2} (2r+1)!!(2s+1)!!
\sum_{\{2,\cdots,n\}=I\coprod J}\langle\tau_r\prod_{i\in
I}\tau_{d_i}\rangle_{g'}\langle\tau_s\prod_{i\in
J}\tau_{d_i}\rangle_{g-g'}
\end{multline}
which is equivalent to the Virasoro constraint.

We also have the following recursive formula from integrating the
first KdV equation of the Witten-Kontsevich theorem (see Proposition
3.3 in \cite{LX1})
\begin{equation} \label{kdv}
(2g+n-1)\langle\tau_0\prod_{j=1}^n\tau_{d_j}\rangle_g=\frac{1}{12}\langle\tau_0^4\prod_{j=1}^n\tau_{d_j}\rangle_{g-1}
+\frac{1}{2}\sum_{\underline{n}=I\coprod
J}\langle\tau_0^2\prod_{i\in
I}\tau_{d_i}\rangle_{g'}\langle\tau_0^2\prod_{i\in
J}\tau_{d_i}\rangle_{g-g'}.
\end{equation}

\begin{definition}
The following generating function
$$F(x_1,\cdots,x_n)=\sum_{g=0}^{\infty}\sum_{\sum d_i=3g-3+n}\langle\tau_{d_1}\cdots\tau_{d_n}\rangle_g\prod_{i=1}^n x_i^{d_i}$$
is called the $n$-point function.
\end{definition}

In particular, we have Witten's one-point function
$$F(x)=\frac{1}{x^2}\exp\left(\frac{x^3}{24}\right),$$
which is equivalent to $\langle\tau_{3g-2}\rangle_g=1/(24^g g!)$.

The two-point function has a simple explicit form due to Dijkgraaf
(see \cite{Fa2})
$$F(x_1,x_2)=\frac{1}{x_1+x_2}\exp\left(\frac{x_1^3}{24}+\frac{x_2^3}{24}\right)\sum_{k=0}^{\infty}\frac{k!}{(2k+1)!}\left(\frac{1}{2}x_1x_2(x_1+x_2)\right)^k.$$
A general study of the $n$-point function can be found in
\cite{LX3}.

From Dijkgraaf's two-points function, it is not difficult to see
that
\begin{align*}
\lim_{g\rightarrow\infty}\frac{\langle\tau_k\tau_{3g-1-k}\rangle_k}{g^k\langle\tau_{3g-2}\rangle_g}
&=\lim_{g\rightarrow\infty}\frac{k!}{24^{g-k}(2k+1)!2^k(g-k)!}\cdot\frac{24^{g}\cdot
g!}{g^k}\\
&=\frac{k!24^{k}}{(2k+1)!2^k}\\
&=\frac{6^k}{(2k+1)!!}.
\end{align*}

In fact, we have the following more general result.

\begin{proposition} \label{M2}
For any fixed set $\bold d=(d_1,\dots, d_n)$ of non-negative
integers, the limit of the following quantity
\begin{equation}
C(d_1,\cdots,d_n;g)=\frac{\langle\tau_{d_1}\cdots\tau_{d_n}\tau_{3g-2+n-|\bold
d|}\rangle_g} {(6g)^{|\bold
d|}\langle\tau_{3g-2}\rangle_g}\prod^n_{i=1}(2d_i+1)!!
\end{equation}
exists and we have $\lim_{g\rightarrow\infty} C(d_1,\dots,d_n;g)=1$.
\end{proposition}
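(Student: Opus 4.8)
The plan is to prove the statement by induction on the total weight $|\mathbf{d}| = d_1 + \cdots + d_n$, where the inductive statement, at each fixed weight, is quantified over all $n \ge 1$ and all index vectors of that weight (and asserts the limit as the genus tends to infinity). Throughout I write $D = 3g - 2 + n - |\mathbf{d}|$ for the complementary index, so that $\langle\tau_{d_1}\cdots\tau_{d_n}\tau_D\rangle_g$ is a top intersection number on $\overline{\mathcal M}_{g,n+1}$ and $2D \sim 6g$. The base case $|\mathbf{d}| = 0$ is \emph{exact}: repeated use of the string equation (the $d_1=0$ specialization of \eqref{DVV}, whose genus-lowering terms are empty since $r+s=-2$) collapses $\langle\tau_0^{\,n}\tau_{3g-2+n}\rangle_g$ step by step down to $\langle\tau_{3g-2}\rangle_g$, so that $C(0,\dots,0;g) = 1$ for every $g$. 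For the inductive step I may reorder so that $d_1 \ge 1$, and I would apply the DVV formula \eqref{DVV} to $\langle\tau_{d_1}\cdots\tau_{d_n}\tau_D\rangle_g$ with the distinguished index taken to be the \emph{small} index $d_1$ rather than the large index $D$.

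The heart of the matter is that, among the terms produced by \eqref{DVV}, a single dominant contribution comes from merging $\tau_{d_1}$ with the large insertion $\tau_D$, namely
\[
\frac{(2d_1+2D-1)!!}{(2D-1)!!}\,\langle\tau_{d_2}\cdots\tau_{d_n}\tau_{d_1+D-1}\rangle_g.
\]
Here $\tau_{d_1+D-1}$ is exactly the complementary large index attached to the reduced weight vector $(d_2,\dots,d_n)$, so the inductive hypothesis (valid since $|\mathbf{d}|-d_1 < |\mathbf{d}|$) controls this $n$-point number. Combining it with the elementary asymptotics $\frac{(2d_1+2D-1)!!}{(2D-1)!!} = (2D+1)(2D+3)\cdots(2D+2d_1-1) \sim (6g)^{d_1}$ reproduces precisely the factor $(6g)^{d_1}$ and the double factorials required, and yields
\[
(2d_1+1)!!\,\langle\tau_{d_1}\cdots\tau_{d_n}\tau_D\rangle_g \;\sim\; \frac{(6g)^{|\mathbf{d}|}}{\prod_{i=2}^n(2d_i+1)!!}\,\langle\tau_{3g-2}\rangle_g,
\]
which is exactly the assertion $C(\mathbf{d};g)\to 1$.

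It remains to show that every other term of \eqref{DVV} is of strictly smaller order. A term merging $\tau_{d_1}$ with another small index $\tau_{d_j}$ carries a bounded coefficient and, by the inductive hypothesis, an $n$-point number of weight $|\mathbf{d}|-1$, so it is $O(1/g)$ relative to the dominant term. The genus-lowering terms are handled using the explicit value $\langle\tau_{3g-2}\rangle_g = 1/(24^g g!)$, which gives the comparison $\langle\tau_{3(g-h)-2}\rangle_{g-h} \sim 24^h g^h\,\langle\tau_{3g-2}\rangle_g$. The non-separating term has weight $|\mathbf{d}|-2$ at genus $g-1$, hence by the inductive hypothesis at genus $g-1$ is of order $g\cdot(6g)^{|\mathbf{d}|-2}\langle\tau_{3g-2}\rangle_g$, again $O(1/g)$ relative to the dominant term. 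For the separating terms I would argue by dimension counting: the large index $D\sim 3g$ must be absorbed by one of the two factors, forcing that factor to have genus $g-h$ with $h$ bounded in terms of $\mathbf{d}$; a short computation gives its weight as $|\mathbf{d}|-3h-|J|$, while the complementary genus-$h$ factor is $O(1)$ and its stability forces $2h+|J|\ge 2$ on every nonvanishing term. Feeding this into the inductive hypothesis for the large factor, together with the one-point ratio above, bounds each separating term by a constant times $g^{-2h-|J|} = O(1/g^2)$ relative to the dominant term. Since \eqref{DVV} has only finitely many terms for fixed $\mathbf{d}$, summing them completes the induction. I expect the separating (third-line) terms of \eqref{DVV} to be the main obstacle, precisely because the one-point values $1/(24^g g!)$ reward unbalanced genus splittings through the factor $24^h g^h$; the delicate point is verifying that the dimension constraints always force the genus drop and the extra marked points to outweigh this enhancement.
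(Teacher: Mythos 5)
Your proposal is correct and follows the same overall strategy as the paper's proof: induction on $|\bold d|$, base case from the string equation, and inductive step from the DVV recursion \eqref{DVV} applied with the small index $\tau_{d_1}$ distinguished, the dominant contribution coming from coupling $\tau_{d_1}$ to the large insertion $\tau_{3g-2+n-|\bold d|}$, whose coefficient $(2D+1)(2D+3)\cdots(2D+2d_1-1)\sim(6g)^{d_1}$ produces the normalization. Where you diverge is in how the subleading terms are controlled. The paper bounds the genus-lowering contributions through the KdV-derived identity \eqref{kdv}, which gives the comparison \eqref{eqkdv2} between genus $g-1$ and genus $g$ correlators, and then asserts the estimates \eqref{eqkdv3} and \eqref{eqkdv4}. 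You instead apply the induction hypothesis directly at genus $g-1$ (resp.\ $g-g'$) --- observing, correctly, that $\tau_{3g-2+n-|\bold d|}$ is still the complementary top insertion for the reduced weight at the lower genus --- and convert back to genus $g$ using the exact one-point value $\langle\tau_{3g-2}\rangle_g=1/(24^g g!)$, i.e.\ $\langle\tau_{3(g-h)-2}\rangle_{g-h}\sim 24^h g^h\langle\tau_{3g-2}\rangle_g$. This is a legitimate and arguably more self-contained route, since it needs only \eqref{DVV}, the string equation, and the one-point function, not the extra lemma \eqref{kdv}; the trade-off is that it leans on the closed form of $\langle\tau_{3g-2}\rangle_g$, which the paper's inequality \eqref{eqkdv2} avoids. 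Your handling of the separating terms --- bounded genus $h$ for the small factor by dimension count, weight $|\bold d|-3h-|J|$ for the large factor, stability forcing $2h+|J|\geq 2$, hence a relative gain $g^{-2h-|J|}=O(g^{-2})$ --- is precisely the bookkeeping that the paper records only later, in the recursion \eqref{eqM16} of Corollary \ref{M9}, and leaves implicit in the proof of the proposition itself, so your write-up is in this respect more complete than the original.
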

\begin{proof} We use induction on $|\bold d|$. When $d_1=\cdots=d_n=0$, it
is obviously true by the string equation.

From \eqref{kdv} and the string equation, we have that for any
$\bold k=(k_1,\dots,k_m)$ with $|\bold k|<|\bold d|$,
\begin{align}
\langle\prod_{i=1}^m\tau_{k_i}\tau_{3g-5+m-|\bold
d|}\rangle_{g-1}&\leq
\langle\tau_0^4\prod_{i=1}^m\tau_{k_i}\tau_{3g-1+m-|\bold d|}\rangle_{g-1}\nonumber\\
&\leq
12(2g+m)\langle\tau_0\prod_{i=1}^m\tau_{k_i}\tau_{3g-1+m-|\bold d|}\rangle_{g}\label{eqkdv2}\\
&=O\left(g\cdot \langle\prod_{i=1}^m\tau_{k_i}\tau_{3g-1+m-|\bold
d|}\rangle_{g}\right). \nonumber
\end{align}
Here $f_1(g)=O(f_2(g))$ means there exists a constant $C>0$
independent of $g$ such that $$f_1(g)\leq C f_2(g).$$ Note that the
last equation in \eqref{eqkdv2} is obtained by induction, since
$|\bold k|<|\bold d|$.

Let us expand $\langle\tau_{d_1}\cdots\tau_{d_n}\tau_{3g-2+n-|\bold
d|}\rangle_g$ using \eqref{DVV}. From \eqref{eqkdv2} and by
induction, we see that the second term in the right-hand side of
\eqref{DVV} has the estimate
\begin{equation}\label{eqkdv3}
\frac{1}{2}\sum_{r+s=d_1-2}
(2r+1)!!(2s+1)!!\langle\tau_r\tau_s\prod_{i=2}^n\tau_{d_i}\tau_{3g-2+n-|\bold
d|}\rangle_{g-1} =O\left(g^{|\bold d|-1}\right).
\end{equation}
Similarly, the third term in the right-hand side of \eqref{DVV} has
the estimate
\begin{equation}\label{eqkdv4}
\sum_{r+s=d_1-2} (2r+1)!!(2s+1)!! \sum_{\{2,\cdots,n\}=I\coprod
J}\langle\tau_r\prod_{i\in
I}\tau_{d_i}\rangle_{g'}\langle\tau_s\prod_{i\in
J}\tau_{d_i}\tau_{3g-2+n-|\bold d|}\rangle_{g-g'}=O\left(g^{|\bold
d|-2}\right).
\end{equation}

So by induction, we have
\begin{align}
\lim_{g\rightarrow\infty} C(d_1,\dots,d_n;g)=&\lim_{g\rightarrow\infty}\sum^n_{j=2}\frac{(2d_j+1) C(d_2,\cdots, d_{j}+d_1-1,\cdots d_n;g)} {6g}\nonumber\\
&+\lim_{g\rightarrow\infty}\frac{(2d_1+2(3g-2+n-|\bold
d|)-1)!!}{(2(3g-2+n-|\bold d|)-1)!!}\cdot\frac{C(d_2,\cdots,
d_n;g)}{(6g)^{d_1}} \label{eqkdv5}\\ =& 1. \nonumber
\end{align}
\end{proof}

\begin{corollary} \label{M9} We have the following large
genus asymptotic expansion
\begin{equation} \label{eqM15}
C(d_1,\dots,d_n; g)=1+\frac{C_1(d_1,\dots,d_n;
g)}{g}+\frac{C_2(d_1,\dots,d_n; g)}{g^2}+\cdots,
\end{equation}
where the coefficients $C_j(d_1,\dots,d_n; g)$ are determined
recursively by induction on $|\bold d|$,
\begin{multline} \label{eqM16}
C(d_1,\dots,d_n;g)=\frac{1}{6g}\sum_{j=2}^n
(2d_j+1)C(d_2,\dots,d_j+d_1-1,\dots,d_n;g)\\
+\frac{\prod_{j=1}^{d_1} (g+\frac{2n-2|\bold d|+2j-5}{6})}{g^{d_1}}
C(d_2,\dots,d_n;g)+\frac{(g-1)^{|\bold d|-2}}{3g^{|\bold d|-1}}\sum_{r+s=d_1-2} C(r,s,d_2,\dots,d_n;g-1)\\
+\sum_{r+s=d_1-2} \sum_{\{2,\cdots,n\}=I\coprod
J}24^{g'}6^{|J|+1-n-3g'}\langle\tau_r\prod_{i\in
I}\tau_{d_i}\rangle_{g'}^{\bold w}\\
\times\frac{(g-g')^{|J|+1-n+|\bold
d|-3g'}\prod_{j=1}^{g'}(g+1-j)}{g^{|\bold d|}}C(s,d_J;g-g'),
\end{multline}
where $d_J$ denote the set $\{d_i\}_{i\in J}$.

In fact, the expansion $C(d_1,\dots,d_n; g)$ has only finite nonzero
terms, i.e. $C_j(d_1,\dots,d_n; g)=0$ when $j$ is large enough.
\end{corollary}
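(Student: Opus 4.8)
The plan is to upgrade the estimates in the proof of Proposition~\ref{M2} to an exact identity and then run an induction on $|\bold d|$ that yields termination directly. First I would apply the DVV formula \eqref{DVV} to $\langle\tau_{d_1}\cdots\tau_{d_n}\tau_{d_{n+1}}\rangle_g$, where $d_{n+1}=3g-2+n-|\bold d|$ is the large index and $\tau_{d_1}$ is the distinguished insertion, and then divide by $(6g)^{|\bold d|}\langle\tau_{3g-2}\rangle_g$ and multiply by $\prod_{i=1}^n(2d_i+1)!!$ so that every intersection number is rewritten as a $C$-quantity. The only inputs needed to make this exact rather than merely asymptotic are Witten's value $\langle\tau_{3g-2}\rangle_g=1/(24^g g!)$, which produces the ratios $\langle\tau_{3g-5}\rangle_{g-1}/\langle\tau_{3g-2}\rangle_g=24g$ and $\langle\tau_{3(g-g')-2}\rangle_{g-g'}/\langle\tau_{3g-2}\rangle_g=24^{g'}\prod_{j=1}^{g'}(g+1-j)$, together with the double-factorial identity $\frac{(2d_1+2d_{n+1}-1)!!}{(2d_{n+1}-1)!!}=6^{d_1}\prod_{j=1}^{d_1}\big(g+\tfrac{2n-2|\bold d|+2j-5}{6}\big)$ coming from the $j=n+1$ term of the first DVV sum. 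Tracking these factors turns the four pieces of \eqref{DVV} into exactly the four terms of \eqref{eqM16}; this is the routine but bookkeeping-heavy part.

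For existence and termination I would prove the sharper statement that $C(d_1,\dots,d_n;g)$ is a polynomial in $1/g$ of degree at most $|\bold d|$, by induction on $|\bold d|$. The base case $|\bold d|=0$ is $C\equiv1$ by the string equation, consistent with the closed form $C(k;g)=\prod_{j=0}^{k-1}(1-j/g)$ for $n=1$ that follows from Dijkgraaf's two-point function. In the inductive step every $C$ on the right-hand side of \eqref{eqM16} has strictly smaller multi-index sum, so the inductive hypothesis applies to each of them at genus $g$, $g-1$, or $g-g'$. The two genus-$g$ terms are harmless: their prefactors $\tfrac1{6g}$ and $\prod_{j=1}^{d_1}(g+\cdots)/g^{d_1}$ are themselves polynomials in $1/g$, of degrees $1$ and $\le d_1$, so multiplying them by the inductively polynomial $C$-factors (of multi-index sums $|\bold d|-1$ and $|\bold d|-d_1$) gives polynomials in $1/g$ of degree $\le|\bold d|$.

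The heart of the argument is the genus-lowering terms, where a naive re-expansion of $C(\cdots;g-1)$ or $C(\cdots;g-g')$ via $1/(g-a)=\sum_{m\ge0}a^m g^{-m-1}$ would produce infinite tails. The point is that \eqref{eqM16} supplies exactly the compensating powers $(g-1)^{|\bold d|-2}$ and $(g-g')^{\,|J|+1-n+|\bold d|-3g'}$. By induction $C(r,s,d_2,\dots,d_n;g-1)$ has degree $\le|\bold d|-2$ in $1/(g-1)$ and $C(s,d_J;g-g')$ has degree $\le s+|d_J|$ in $1/(g-g')$, while the dimension identities $r+\sum_{i\in I}d_i=3g'-2+|I|$ and $r+s=d_1-2$ give $|J|+1-n+|\bold d|-3g'=s+|d_J|$. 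Thus in each case the power of $(g-a)$ equals the degree of the relevant polynomial in $1/(g-a)$, so $(g-1)^{|\bold d|-2}C(\cdots;g-1)$ and $(g-g')^{s+|d_J|}C(s,d_J;g-g')$ are honest polynomials in $g$; multiplying by $\prod_{j=1}^{g'}(g+1-j)$, dividing by the displayed powers of $g$, and using $s+|d_J|+g'\le|\bold d|$ shows each genus-lowering term is a polynomial in $1/g$ of degree $\le|\bold d|$. Summing the four contributions proves the claim, whence $C_j(d_1,\dots,d_n;g)=0$ for $j>|\bold d|$.

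I expect the main obstacle to be exactly this exponent-matching, namely verifying the identity $|J|+1-n+|\bold d|-3g'=s+|d_J|$ and its analogue for the genus $g-1$ term: it is this degree coincidence that converts the apparent infinite geometric tails into finite polynomials and thereby forces termination. A secondary, purely computational obstacle is the faithful derivation of the constants $24^{g'}$, $6^{|J|+1-n-3g'}$ and $\prod_{j=1}^{g'}(g+1-j)$ in \eqref{eqM16} from the normalizations above, since a single mismatched power of $6$ or $24$ would break the cancellation.
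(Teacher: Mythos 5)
Your proposal is correct in substance, and its first half is precisely the computation the paper leaves implicit: the paper obtains \eqref{eqM16} by normalizing the DVV expansion \eqref{DVV} of $\langle\tau_{d_1}\cdots\tau_{d_n}\tau_{3g-2+n-|\bold d|}\rangle_g$ (the content of equations \eqref{eqkdv3}, \eqref{eqkdv4}, \eqref{eqkdv5}) by $(6g)^{|\bold d|}\langle\tau_{3g-2}\rangle_g$ with $\langle\tau_{3g-2}\rangle_g=1/(24^g g!)$, and your double-factorial identity, the ratios $24g$ and $24^{g'}\prod_{j=1}^{g'}(g+1-j)$, and the exponent identity $|J|+1-n+|\bold d|-3g'=s+|d_J|$ (which indeed follows from $r+|d_I|=3g'-2+|I|$, $|I|+|J|=n-1$ and $r+s=d_1-2$) are all correct. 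Where you genuinely diverge is the finiteness claim: the paper defers it to Corollary \ref{M12}, i.e.\ it multiplies through by $(6g)^{|\bold d|}$ and proves by induction on the denominator-free recursion \eqref{eqM14} that $P_{d_1,\dots,d_n}(g)=(6g)^{|\bold d|}C(d_1,\dots,d_n;g)$ is a polynomial in $g$ (the appeal to \cite{LX4} there is needed only for integrality of the coefficients, not for polynomiality), whereas you prove directly that $C$ is a polynomial in $1/g$ of degree at most $|\bold d|$. These are the same induction conjugated by $(6g)^{|\bold d|}$: in the $P$-picture your \emph{exponent-matching} is automatic, since \eqref{eqM14} visibly has polynomial coefficients in $g$, while in the $C$-picture it must be verified by hand, as you do. Your route buys a termination proof that is self-contained within the corollary; the paper's route buys the stronger conclusion recorded in Corollary \ref{M12} (integer coefficients and the leading term $6^{|\bold d|}g^{|\bold d|}$).

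One error should be removed, though it is not load-bearing: the parenthetical closed form $C(k;g)=\prod_{j=0}^{k-1}(1-j/g)$ is false --- it contradicts the paper's own values $C(1;g)=1-\frac{1}{2g}$ and $C(2;g)=1-\frac{1}{g}+\frac{5}{12g^2}$ --- and it does not follow from Dijkgraaf's two-point function. Since you invoke it only as a consistency check on the (correct) base case $C(0,\dots,0;g)=1$, deleting it leaves your argument intact.
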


\begin{proof}
The recursive relation follows from the asymptotic expansions of
equations \eqref{eqkdv3}, \eqref{eqkdv4} and \eqref{eqkdv5}. The
last assertion will follow from Corollary \ref{M12}.
\end{proof}

\begin{remark}

When $n=0$ or $|\bold d|=0$, we have
\begin{equation}
C(\emptyset ;g)=C(0,\dots,0;g)=1.
\end{equation}
By the string and dilaton equations, we have
\begin{align}
&C(0,d_2,\dots,d_n;g)=\frac{1}{6g}\sum_{j=2}^n
(2d_j+1)C(d_2,\dots,d_j-1,\dots,d_n;g)+ C(d_2,\dots,d_n;g),\\
&C(1,d_2,\dots,d_n;g)=(1+\frac{n-2}{2g})C(d_2,\dots,d_n;g).
\end{align}
So we may assume $d_i\geq2, \forall i$ in $C(d_1,\dots,d_n;g)$.
\end{remark}

\begin{remark} In large $g$ expansion, we have
\begin{equation}
\frac{1}{(g-m)^k}=\left(\sum_{i=1}^\infty
\frac{m^{i-1}}{g^i}\right)^k
\end{equation}
for any given $m$.

When $d_1,\dots,d_2\geq2$, from \eqref{eqM16} we can deduce that
\begin{equation} \label{eqM17}
C_1(d_1,\dots,d_n;g)=-\frac{|\bold d|^2}{6}+\frac{(n-1)|\bold
d|}{3}+\frac{n^2}{12}-\frac{5n}{12}.
\end{equation}
In particular,
\begin{align*}
C_1(d_1;g)&=-\frac{d_1}{6}-\frac13,\\
C_1(d_1,d_2;g)&=-\frac{1}{6}(d_1+d_2)^2+\frac13(d_1+d_2)- \frac12.
\end{align*}
\end{remark}

\

For the full expansion of $C(d_1,\dots,d_n;g)$, let us look at some
examples
\begin{align*}
C(1;g)=C(1,1;g)&=1-\frac{1}{2g},\\
C(2;g)&=1-\frac1g+\frac{5}{12g^2},\\
C(3;g)&=1-\frac{11}{6g}+\frac{95}{72g^2}-\frac{35}{72g^3}, \\
C(2,2;g)&=1-\frac{11}{6g}+\frac{17}{12g^2}-\frac{7}{12g^3}.
\end{align*}

In fact, we will see in a moment that the expansion \eqref{eqM15} of
$C(d_1,\dots,d_n; g)$ is a polynomial in $1/g$. Let
\begin{equation}
P_{d_1,\dots,d_n}(g)=(6g)^{|\bold d|}C(d_1,\dots,d_n;g).
\end{equation}
The recursive formula \eqref{eqM16} in Corollary \ref{M9} becomes
\begin{multline} \label{eqM14}
P_{d_1,\dots,d_n}(g)=\sum_{j=2}^n
(2d_j+1)P_{d_2,\dots,d_j+d_1-1,\dots,d_n}(g)\\
+\prod_{j=1}^{d_1} (6g+2n-2|\bold d|+2j-5)
P_{d_2,\dots,d_n}(g)+12g\sum_{r+s=d_1-2} P_{r,s,d_2,\dots,d_n}(g-1)\\
+\sum_{r+s=d_1-2} \sum_{\{2,\cdots,n\}=I\coprod
J}24^{g'}\langle\tau_r\prod_{i\in I}\tau_{d_i}\rangle_{g'}^{\bold
w}\prod_{j=1}^{g'}(g+1-j) P_{s,d_J}(g-g'),
\end{multline}

\begin{corollary} \label{M12}
For any fixed set $\bold d=(d_1,\dots, d_n)$ of non-negative
integers,
$$P_{d_1,\dots,d_n}(g)=\frac{\langle\tau_{d_1}\cdots\tau_{d_n}\tau_{3g-2+n-|\bold d|}\rangle_g}
{\langle\tau_{3g-2}\rangle_g}\prod^n_{i=1}(2d_i+1)!!$$ is a
polynomial in $\mathbb Z[g]$ with highest-degree term $6^{|\bold d|}
g^{|\bold d|}$. These polynomials $P_{d_1,\dots,d_n}(g)$ are
determined uniquely by the recursive relation \eqref{eqM14} and
$P_{\emptyset}(g)=P_{0,\dots,0}(g)=1$.
\end{corollary}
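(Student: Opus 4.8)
The plan is to prove Corollary \ref{M12} by establishing three things: that $P_{d_1,\dots,d_n}(g)$ is a polynomial in $g$ with integer coefficients, that its leading term is $6^{|\bold d|}g^{|\bold d|}$, and that it is uniquely determined by the recursion \eqref{eqM14} together with the base case. I would argue by induction on $|\bold d|$, since the recursion \eqref{eqM14} expresses $P_{d_1,\dots,d_n}(g)$ in terms of $P$-polynomials whose total index is strictly smaller. The base case $P_\emptyset(g)=P_{0,\dots,0}(g)=1$ is immediate from the string equation, exactly as in the proof of Proposition \ref{M2}.

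First I would verify that \eqref{eqM14} is genuinely a recursion that lowers $|\bold d|$. In the first sum on the right, each term $P_{d_2,\dots,d_j+d_1-1,\dots,d_n}(g)$ has total index $|\bold d|-1$; the product term carries $P_{d_2,\dots,d_n}(g)$ of total index $|\bold d|-d_1$; the $12g$-term involves $P_{r,s,d_2,\dots,d_n}(g-1)$ with $r+s=d_1-2$, hence total index $|\bold d|-2$; and the final double sum involves $P_{s,d_J}(g-g')$ whose total index is again strictly below $|\bold d|$. Thus every $P$ appearing on the right side has strictly smaller total index, so by the inductive hypothesis each is a polynomial in $\mathbb Z[g]$ (after the shifts $g\mapsto g-1$ and $g\mapsto g-g'$, which preserve integrality). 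The only remaining non-polynomial-looking factors are the explicit products $\prod_{j=1}^{d_1}(6g+2n-2|\bold d|+2j-5)$ and $\prod_{j=1}^{g'}(g+1-j)$, together with the integer constants $24^{g'}\langle\tau_r\prod_{i\in I}\tau_{d_i}\rangle_{g'}^{\bold w}$; each of these is manifestly a polynomial in $g$ with integer coefficients. Since sums and products of elements of $\mathbb Z[g]$ remain in $\mathbb Z[g]$, it follows that $P_{d_1,\dots,d_n}(g)\in\mathbb Z[g]$.

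Next I would track the leading term. The product term $\prod_{j=1}^{d_1}(6g+\cdots)P_{d_2,\dots,d_n}(g)$ contributes $(6g)^{d_1}\cdot 6^{|\bold d|-d_1}g^{|\bold d|-d_1}=6^{|\bold d|}g^{|\bold d|}$ at top degree, using the inductive hypothesis on the leading term of $P_{d_2,\dots,d_n}(g)$. I would then check that no other term on the right reaches degree $|\bold d|$: the first sum contributes degree $\le|\bold d|-1$, the $12g$-term contributes degree $1+(|\bold d|-2)=|\bold d|-1$, and in the final sum the factor $\prod_{j=1}^{g'}(g+1-j)$ has degree $g'$ while $P_{s,d_J}(g-g')$ has degree $|J|+1-n+|\bold d|-3g'$, summing to a degree strictly less than $|\bold d|$ because $g'\ge1$ forces $1-3g'+g'<0$. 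Hence the leading term is exactly $6^{|\bold d|}g^{|\bold d|}$, which also confirms the normalization $C(d_1,\dots,d_n;g)=P_{d_1,\dots,d_n}(g)/(6g)^{|\bold d|}\to1$, consistent with Proposition \ref{M2}.

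The main obstacle I anticipate is not any single step but the careful bookkeeping of the degree and integrality of the final double-sum term in \eqref{eqM14}, where the stable-range index shifts interact with the factor $\prod_{j=1}^{g'}(g+1-j)$ and the genus-$(g-g')$ argument. One must confirm that the normalized intersection numbers $\langle\tau_r\prod_{i\in I}\tau_{d_i}\rangle_{g'}^{\bold w}$ are integers (or at least that the combination multiplying them lands in $\mathbb Z[g]$) and that the degree count genuinely stays below $|\bold d|$ for all admissible partitions $\{2,\dots,n\}=I\amalg J$ and all $g'$ with $0\le g'\le g$. Once the polynomiality and leading-term claims are in hand, uniqueness is formal: the recursion \eqref{eqM14} expresses each $P_{d_1,\dots,d_n}(g)$ solely in terms of strictly-lower-index $P$-polynomials and explicit data, so together with $P_\emptyset(g)=1$ it determines every $P_{d_1,\dots,d_n}(g)$ uniquely. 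Finally, the polynomiality of $P_{d_1,\dots,d_n}(g)$ immediately yields the deferred claim from Corollary \ref{M9} that the expansion \eqref{eqM15} of $C(d_1,\dots,d_n;g)=P_{d_1,\dots,d_n}(g)/(6g)^{|\bold d|}$ terminates, completing that earlier proof as well.
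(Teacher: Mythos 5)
There is a genuine gap at the crux of your integrality argument. In the body of your proof you assert that the constants $24^{g'}\langle\tau_r\prod_{i\in I}\tau_{d_i}\rangle_{g'}^{\bold w}$ appearing in \eqref{eqM14} are ``manifestly'' integers; this is false. Already for the one-point bracket, $\langle\tau_{3g'-2}\rangle_{g'}=1/(24^{g'}g'!)$ gives
\[
24^{g'}\langle\tau_{3g'-2}\rangle_{g'}^{\bold w}=\frac{(6g'-3)!!}{g'!},
\]
which for $g'=2$ equals $9!!/2=945/2\notin\mathbb Z$ (and such terms do occur in \eqref{eqM14} once $d_1\geq 6$). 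So the step on which the whole $\mathbb Z[g]$ claim rests fails as stated; you do flag this as the ``main obstacle'' and hedge with ``or at least that the combination lands in $\mathbb Z[g]$,'' but identifying the needed fact is not the same as proving it, and no formal manipulation of the recursion will produce it. The paper's proof consists precisely of supplying this missing arithmetic input: it invokes Theorem 4.3(iv) and Proposition 4.4 of \cite{LX4}, which give the denominator bound $24^{g'}g'!\langle\tau_r\prod_{i\in I}\tau_{d_i}\rangle_{g'}^{\bold w}\in\mathbb Z$, and then absorbs the extra $g'!$ using the divisibility of $\prod_{j=1}^{g'}(g+1-j)=g'!\binom{g}{g'}$ by $g'!$, so the offending coefficient in \eqref{eqM14} becomes an integer multiple of an integer-valued polynomial. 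Integrality here is a nontrivial property of intersection numbers, not a consequence of the recursion's shape.

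The remaining parts of your argument --- the induction on $|\bold d|$, the leading-term count showing only the product term contributes $6^{|\bold d|}g^{|\bold d|}$, and the formal uniqueness statement --- agree with the paper's (mostly implicit) reasoning and are fine, with two small caveats. First, when $d_1=0$ the product term in \eqref{eqM14} is exactly $P_{d_2,\dots,d_n}(g)$, of the \emph{same} total index, so your claim that every $P$ on the right has strictly smaller index requires choosing $d_1>0$ (the all-zero case being the base case handled by the string equation). Second, your degree estimate for the last sum only treats $g'\geq1$, whereas $g'=0$ terms occur whenever $n\geq3$; there the constraint $|I|\geq2$ forces $|J|+1-n\leq-2$, so the degree still drops, but this case needs to be said.
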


\begin{proof}
 By Theorem 4.3(iv) and
Proposition 4.4 in \cite{LX4}, we have
$$24^{g'}g'!\langle\tau_r\prod_{i\in I}\tau_{d_i}\rangle_{g'}^{\bold w}\in\mathbb Z.$$
Since $g'!$ divides $\prod_{j=1}^{g'}(g+1-j)$, it is not difficult
to see that $P_{d_1,\dots,d_n}(g)$ are polynomials with integer
coefficients by induction using \eqref{eqM14}.
\end{proof}

We introduce some notation. Consider the semigroup $N^\infty$ of
sequences ${\bold m}=(m(1),m(2),\dots)$ where $m(i)$ are nonnegative
integers and $m(i)=0$ for sufficiently large $i$. We also use
$(1^{m(1)}2^{m(2)}\dots)$ to denote $\bold m$.

Let $\bold m, \bold{a_1,\dots,a_n} \in N^\infty$, $\bold
m=\sum_{i=1}^n \bold{a_i}$.
$$|\bold m|:=\sum_{i\geq 1}i m(i)\quad ||\bold m||:=\sum_{i\geq1}m(i) \quad \binom{\bold m}{\bold{a_1,\dots,a_n}}:=\prod_{i\geq1}\binom{m(i)}{a_1(i),\dots,a_n(i)}.$$

Let $\bold m\in N^\infty$, we denote a formal monomial of $\kappa$
classes by
$$\kappa(\bold m):=\prod_{i\geq1}\kappa_i^{m(i)}.$$

The following remarkable identity was proved in \cite{KMZ}.
\begin{equation} \label{eqkmz}
\langle\prod_{j=1}^n\tau_{d_j}\kappa(\bold
m)\rangle_g=\sum_{p=0}^{||\bold m||}\frac{(-1)^{||\bold
m||-p}}{p!}\sum_{\substack {\bold
m=\bold{m_1}+\cdots+\bold{m_p}\\\bold{m_i}\neq\bold 0}}\binom{\bold
m}{\bold
{m_1,\dots,m_p}}\langle\prod_{j=1}^n\tau_{d_j}\prod_{j=1}^p\tau_{|\bold{m_j}|+1}\rangle_g.
\end{equation}

\subsection*{Proof of Theorem \ref{M}} For any $k\geq 1$, by definition we have
\begin{equation}
\frac{a_{g,3g-2-k}}{g^k
a_{g,3g-2}}=\frac{(6g-3-2k)!!2^{6g-4-2k}(2\pi^2)^k\langle\tau_{3g-2-k}\kappa_1^k\rangle_g/k!}{g^k(6g-3)!!2^{6g-4}\langle\tau_{3g-2}\rangle_g}.
\end{equation}
Using \eqref{eqkmz} to expand
$\langle\tau_{3g-2-k}\kappa_1^k\rangle_g$ and taking limit as
$g\rightarrow\infty$, we get by Proposition \ref{M2}
\begin{align*}
\lim_{g\rightarrow\infty}\frac{a_{g,3g-2-k}}{g^k
a_{g,3g-2}}&=\lim_{g\rightarrow\infty}
\frac{(6g-3-2k)!!(2\pi^2)^k\langle\tau_{3g-2-k}\tau_2^k\rangle_g}{g^k(6g-3)!!2^{2k}k!\langle\tau_{3g-2}\rangle_g}\\
&=\frac{\pi^{2k}}{5^k k!}\lim_{g\rightarrow\infty}\frac{15^k\langle\tau_{3g-2-k}\tau_2^k\rangle_g}{(6g)^{2k}\langle\tau_{3g-2}\rangle_g}\\
&=\frac{\pi^{2k}}{5^k
k!}\lim_{g\rightarrow\infty}C(\underbrace{2,\dots,2}_k;g)\\
&=\frac{\pi^{2k}}{5^k
k!}.
\end{align*}
So we get the leading term in the right-hand side of \eqref{eqM}.

Now we compute the coefficient of $1/g$ in the asymptotic expansion
of $a_{g,3g-2-k}/(g^k a_{g,3g-2})$. We have
\begin{multline} \label{eqM18}
\frac{a_{g,3g-2-k}}{g^k
a_{g,3g-2}}=\frac{(6g-3-2k)!!\pi^{2k}\left(\langle\tau_{3g-2-k}\tau_2^k\rangle_g-\frac{k(k-1)}{2}\langle\tau_{3g-2-k}\tau_2^{k-2}\tau_3\rangle_g\right)}{g^k(6g-3)!!2^{k}k!
\langle\tau_{3g-2}\rangle_g}+O(1/g^2)\\
=\frac{\pi^{2k}}{5^k
k!}\left(\frac{(6g)^k}{\prod_{j=1}^k(6g-2j-1)}C(\underbrace{2,\dots,2}_k;g)\right.\\
\left.-\frac{15}{14}k(k-1)\cdot
\frac{(6g)^{k-1}}{\prod_{j=1}^k(6g-2j-1)}
C(\underbrace{2,\dots,2}_{k-2},3;g)\right)+O(1/g^2).
\end{multline}

By \eqref{eqM17}, we have
\begin{equation}
C_1(\underbrace{2,\dots,2}_k;g)=\frac{1}{12}k^2-\frac{13}{12}k.
\end{equation}
Substituting it into \eqref{eqM18}, the coefficient of $1/g$ in the
asymptotic expansion of $a_{g,3g-2-k}/(g^k a_{g,3g-2})$ equals
\begin{equation}
C_1(\underbrace{2,\dots,2}_k;g)+\sum_{j=1}^k
\frac{1+2j}{6}-\frac{15}{14}k(k-1)\times\frac{1}{6}=\frac{1}{14}k^2-\frac{4}{7}k.
\end{equation}
So we get the second term in the right-hand side of \eqref{eqM},
namely
\begin{equation}\label{eqM19}
\frac{a_{g,3g-2-k}}{g^k a_{g,3g-2}}=\frac{\pi^{2k}}{5^k
k!}\left(1+\Big(\frac{1}{14}k^2-\frac{4}{7}k\Big)\frac{1}{g}+O(1/g^2)\right).
\end{equation}
Since there are only finite number of terms in the right-hand side
of \eqref{eqkmz}, from the above proof it is not difficult to see
that for each $k\geq1$, the series in the bracket of \eqref{eqM19}
is a rational function of $g$. So we conclude the proof of Theorem
\ref{M}.

\begin{example}
When $k=1$, we have
\begin{align*}
\frac{a_{g,3g-3}}{g a_{g,3g-2}}&=\frac{\pi^2}{5}\cdot \frac{6g}{6g-3}C(2;g) \\
&=\frac{\pi^2}{5}\cdot
\frac{12g^2-12g+5}{6g(2g-1)}\\
&=\frac{\pi^2}{5}\left(1-\frac{1}{2g}+\sum_{j=2}^\infty
\frac{1}{3\cdot 2^{j-1}g^j} \right).
\end{align*}

When $k=2$, we have
\begin{align*}
\frac{a_{g,3g-4}}{g^2
a_{g,3g-2}}&=\frac{\pi^4}{50}\left(\frac{(6g)^2}{(6g-3)(6g-5)}C(2,2;g)-\frac{15}{7}\cdot\frac{6g}{(6g-3)(6g-5)}C(3;g)\right)
\\
&=\frac{\pi^4}{50}\cdot\frac{(g-1)(1008g^3-1200g^2+888g-175)}{84g^2(2g-1)(6g-5)}\\
&=\frac{\pi^4}{50}\left(1-\frac{6}{7g}+\frac{43}{84g^2}+\cdots\right).
\end{align*}
These equations can be verified in low genera using the following
data:
\begin{gather*}
a_{1,0}=\frac{\pi^2}{12},\quad a_{1,1}=\frac{1}{2},\quad
a_{2,0}=\frac{29\pi^8}{192},\quad a_{2,1}=\frac{169\pi^6}{120},\quad
a_{2,2}=\frac{139\pi^4}{12},\\
a_{2,3}=\frac{203\pi^2}{3},\quad a_{2,4}=210,\quad
a_{3,0}=\frac{9292841\pi^{14}}{4082400},\quad
a_{3,1}=\frac{8497697\pi^{12}}{388800},\\
a_{3,2}=\frac{8983379\pi^{10}}{45360},\quad
a_{3,3}=\frac{127189\pi^8}{81},\quad a_{3,4}=\frac{94418\pi^6}{9},\\
a_{3,5}= \frac{166364\pi^4}{3},\quad
a_{3,6}=\frac{616616\pi^2}{3},\quad a_{3,7}=400400.
\end{gather*}
\end{example}

\begin{corollary}
For any $\bold m=(m(1),m(2),\dots)\in N^\infty$, we have the
following limit equation involving higher degree $\kappa$ classes
\begin{equation}
\lim_{g\rightarrow\infty}\frac{\langle\prod_{i=1}^n\tau_{d_i}\tau_{3g-2+n-|\bold
d|-|\bold m|}\kappa(\bold m)\rangle_g}{(6g)^{|\bold d|+|\bold
m|+||\bold m||}\langle\tau_{3g-2}\rangle_g}=\frac{\bold m!}{||\bold
m||!\prod_{i=1}^n (2d_i+1)!!\prod_{j\geq1}((2j+3)!!)^{m(j)}}.
\end{equation}
\begin{proof}
This identity follows directly from Proposition \ref{M2} and
equation \eqref{eqkmz}.
\end{proof}

\end{corollary}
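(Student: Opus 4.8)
The plan is to reduce everything to the pure $\psi$-intersection asymptotics already packaged in Proposition~\ref{M2}, using the Kaufmann--Manin--Zagier identity \eqref{eqkmz} to trade the $\kappa$-classes for $\psi$-classes. Concretely, I would apply \eqref{eqkmz} to the numerator $\langle\prod_{i=1}^n\tau_{d_i}\,\tau_{3g-2+n-|\bold d|-|\bold m|}\,\kappa(\bold m)\rangle_g$, carrying the large insertion $\tau_{3g-2+n-|\bold d|-|\bold m|}$ along untouched as one of the ordinary $\tau$-factors. This expresses the numerator as a finite combination, indexed by partitions $\bold m=\bold{m_1}+\cdots+\bold{m_p}$ into $p$ nonzero parts with $0\le p\le ||\bold m||$, of the pure $\psi$-numbers $\langle\prod_{i=1}^n\tau_{d_i}\,\tau_{3g-2+n-|\bold d|-|\bold m|}\,\prod_{j=1}^p\tau_{|\bold{m_j}|+1}\rangle_g$, each weighted by $\frac{(-1)^{||\bold m||-p}}{p!}\binom{\bold m}{\bold{m_1},\dots,\bold{m_p}}$.

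Next I would feed each of these pure $\psi$-numbers into Proposition~\ref{M2}. Such a number has precisely $n+p$ \emph{fixed} indices, namely $d_1,\dots,d_n$ together with $|\bold{m_1}|+1,\dots,|\bold{m_p}|+1$, whose total is $|\bold d|+|\bold m|+p$; the remaining insertion then carries the index $3g-2+(n+p)-(|\bold d|+|\bold m|+p)=3g-2+n-|\bold d|-|\bold m|$, which is exactly the large one in the numerator, so Proposition~\ref{M2} applies verbatim and yields
$$\langle\prod_{i=1}^n\tau_{d_i}\,\tau_{3g-2+n-|\bold d|-|\bold m|}\,\prod_{j=1}^p\tau_{|\bold{m_j}|+1}\rangle_g\sim\frac{(6g)^{|\bold d|+|\bold m|+p}\,\langle\tau_{3g-2}\rangle_g}{\prod_{i=1}^n(2d_i+1)!!\,\prod_{j=1}^p(2|\bold{m_j}|+3)!!}.$$
Dividing by the normalization $(6g)^{|\bold d|+|\bold m|+||\bold m||}\langle\tau_{3g-2}\rangle_g$ leaves the power $(6g)^{p-||\bold m||}$. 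Since a partition into nonzero parts has at most $||\bold m||$ of them, every term with $p<||\bold m||$ is $O(1/g)$ and disappears in the limit; only the finest partition $p=||\bold m||$ contributes.

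When $p=||\bold m||$ each part is forced to be a unit vector $e_{c_j}$, the parts realizing exactly $m(i)$ copies of $e_i$ for every $i$; then $\prod_{j=1}^p(2|\bold{m_j}|+3)!!=\prod_{i\ge1}((2i+3)!!)^{m(i)}$, each multinomial coefficient equals $\binom{\bold m}{\bold{m_1},\dots,\bold{m_p}}=\prod_i m(i)!=\bold m!$, and the sign is $+1$. The limit is then the product of $1/||\bold m||!$, of $\bold m!$, of the number of finest partitions that \eqref{eqkmz} actually sums over, and of the double-factorial factor. I expect this last, purely combinatorial, bookkeeping to be the whole difficulty: all of the analytic content is delivered by Proposition~\ref{M2}, and what remains is to pin down the exact rational prefactor. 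The delicate point --- and the step most prone to an off-by-a-symmetry-factor slip --- is whether \eqref{eqkmz} is read as a sum over ordered tuples $(\bold{m_1},\dots,\bold{m_p})$ or over unordered multisets of parts, since this is precisely what decides the appearance of the factor $\bold m!/||\bold m||!$ multiplying $1/\big(\prod_{i=1}^n(2d_i+1)!!\prod_{j\ge1}((2j+3)!!)^{m(j)}\big)$. I would settle this by testing the smallest nontrivial case $\bold m=e_1+e_2$ (i.e.\ $\kappa_1\kappa_2$) against the Arbarello--Cornalba relation $\langle\kappa_1\kappa_2\,\cdots\rangle=\langle\tau_2\tau_3\,\cdots\rangle-\langle\tau_4\,\cdots\rangle$ before committing to the general constant.
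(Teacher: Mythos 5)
Your route is exactly the paper's own proof: the paper's argument is the single sentence that the identity ``follows directly from Proposition \ref{M2} and equation \eqref{eqkmz}'', i.e.\ precisely the reduction you describe, and your execution is correct as far as it goes. The large insertion survives the KMZ expansion carrying exactly the index needed for Proposition \ref{M2} to apply; after dividing by $(6g)^{|\bold d|+|\bold m|+||\bold m||}\langle\tau_{3g-2}\rangle_g$ only the finest decompositions $p=||\bold m||$ survive; each such decomposition consists of unit vectors, has sign $+1$, multinomial coefficient $\bold m!$, and double-factorial weight $\prod_{j\geq1}((2j+3)!!)^{m(j)}$.

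The bookkeeping you defer is indeed the only remaining issue, and your own proposed test settles it --- but not in favor of the printed constant. The relation $\langle\kappa_1\kappa_2\,X\rangle=\langle\tau_2\tau_3\,X\rangle-\langle\tau_4\,X\rangle$ is the correct one: using $\kappa_a^{(n+1)}=\pi^*\kappa_a^{(n)}+\psi_{n+1}^a$ and the projection formula one gets $\langle\tau_1\kappa_1\kappa_2\rangle_{2,1}=4\int_{\overline{\mathcal{M}}_2}\kappa_1\kappa_2+\langle\tau_4\rangle_2=\tfrac{101}{5760}=\langle\tau_1\tau_2\tau_3\rangle_2-\langle\tau_1\tau_4\rangle_2$, whereas the unordered reading of \eqref{eqkmz} would predict $\tfrac{43}{5760}$; similarly only the ordered reading reproduces Mumford's $\int_{\overline{\mathcal{M}}_2}\kappa_1^3=\tfrac{43}{2880}$. (The paper itself uses the ordered reading in the proof of Theorem \ref{M}: the coefficient $\tfrac{k(k-1)}{2}$ in \eqref{eqM18} is the ordered count; the unordered one would be $\tfrac{k}{2}$.) So \eqref{eqkmz} is a sum over \emph{ordered} tuples $(\bold{m_1},\dots,\bold{m_p})$ with $1/p!$ compensating, the number of finest tuples is $||\bold m||!/\bold m!$, and your product collapses as $\tfrac{1}{||\bold m||!}\cdot\tfrac{||\bold m||!}{\bold m!}\cdot\bold m!=1$. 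Hence the limit your argument actually yields is $1\big/\bigl(\prod_{i=1}^n(2d_i+1)!!\prod_{j\geq1}((2j+3)!!)^{m(j)}\bigr)$, with no prefactor; the printed prefactor $\bold m!/||\bold m||!$ is exactly what the (incorrect) unordered reading produces. The two constants coincide precisely when $\bold m$ is supported at a single index (then $\bold m!=||\bold m||!$), e.g.\ for the powers $\kappa_1^k$ used in Theorem \ref{M}, which is why the slip is invisible elsewhere in the paper; for $\kappa_1\kappa_2$ with $n=0$ the true limit is $\tfrac{1}{15\cdot105}$, not $\tfrac{1}{2\cdot15\cdot105}$. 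In short: complete your argument as planned --- it is sound, and it proves the corollary with the corrected constant; the discrepancy you anticipated lies in the paper's stated prefactor, not in your approach.
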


\vskip 30pt
\section{Asymptotics of Weil-Petersson volumes} \label{secZ}

The large genus asymptotics of Weil-Petersson volumes was
conjectured by Zograf based on his numerical experiments \cite{Zo}.
\begin{conjecture}{\rm\bf (Zograf)} For any fixed $n\geq 0$
$$V_{g,n} =(4\pi^2)^{2g+n-3} (2g-3+n)! \frac{1}{\sqrt{g \pi}} \left(1 + \frac{c_{n}}{g} + O\left(\frac{1}{g^{2}}\right)\right)$$
as $ g\rightarrow \infty$, where $c_n$ is a constant depending only
on $n$.
\end{conjecture}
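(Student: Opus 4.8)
The plan is to prove Zograf's conjecture by separating the two quite different sources of growth in $V_{g,n}$: the dependence on the number $n$ of marked points, which can be propagated by a recursion, and the universal dependence on the genus $g$, which is the genuinely hard part. Starting from $V_{g,n}=[\tau_0\cdots\tau_0]_{g,n}=\frac{(2\pi^2)^{3g-3+n}}{(3g-3+n)!}\langle\tau_0^n\kappa_1^{3g-3+n}\rangle_g$, the shape of the target factor $(4\pi^2)^{2g+n-3}(2g-3+n)!$ shows that the conjecture is equivalent to two assertions: first, that $V_{g,n+1}/V_{g,n}=4\pi^2(2g-2+n)\bigl(1+O(1/g)\bigr)$ with a computable $1/g$-coefficient; and second, that for a single value of $n$ the remaining quantity already has the precise $\frac{1}{\sqrt{g\pi}}\bigl(1+O(1/g)\bigr)$ behavior.

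First I would establish the recursion in $n$. Using the string and dilaton relations recorded in the remarks after Corollary \ref{M9} (the displayed identities for $C(0,d_2,\dots)$ and $C(1,d_2,\dots)$), together with Mirzakhani's recursion for the volume polynomials, one should derive an expansion
\[
\frac{V_{g,n+1}}{V_{g,n}}=4\pi^2(2g-2+n)\Bigl(1+\frac{e_n}{g}+O(1/g^2)\Bigr),
\]
with $e_n$ expressible through the subleading coefficients $C_j$ of Corollary \ref{M9}. Matching this against the conjectured form forces $c_{n+1}=c_n+e_n$ to leading order, so that the full statement for all $n$ follows from the statement for one value of $n$, and the constants $c_n$ are pinned down inductively. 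This step uses the polynomiality in $g$ from Corollary \ref{M12} in an essential way, so that the ratios really do admit honest $1/g$-expansions.

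The base case, say $n=1$ with $V_{g,1}=a_{g,0}$, is the main obstacle. Here the fixed-$\bold d$ asymptotics of Proposition \ref{M2} and Theorem \ref{M} do not apply directly, because $a_{g,0}$ is the lowest, maximally ``spread out'' coefficient of the volume polynomial: all $3g-2$ units of dimension sit in $\kappa_1$ classes rather than in a single large $\tau$. The natural route is to apply the identity \eqref{eqkmz} to write $\langle\tau_0\kappa_1^{3g-2}\rangle_g$ as an alternating sum $\sum_{p}\frac{(-1)^{3g-2-p}}{p!}\sum\binom{\cdot}{\cdot}\langle\tau_0\prod_j\tau_{a_j+1}\rangle_g$ of pure $\psi$-intersection numbers, normalize each term against $\langle\tau_{3g-2}\rangle_g=1/(24^g g!)$, and then carry out a saddle-point or singularity analysis of the resulting generating function as $g\to\infty$. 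The emergence of the factor $g^{-1/2}$, and above all of the transcendental constant $\pi^{-1/2}$, strongly signals a square-root-type singularity, equivalently a Gaussian contribution from the sum over the number $p$ of insertions. Controlling this limit, including the delicate sign cancellations in \eqref{eqkmz}, is where the real difficulty lies, since the purely multiplicative ratio asymptotics never see the constant $1/\sqrt{\pi}$.

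A probably more tractable implementation of the base case is to bypass intersection numbers and argue entirely within Mirzakhani's recursion for $V_{g,n}(L)$, in the spirit of Mirzakhani--Zograf: first establish the two ratio estimates $V_{g,n+1}\sim 4\pi^2(2g-2+n)V_{g,n}$ and $V_{g-1,n+2}/V_{g,n}\to 1$, reduce the problem to a one-dimensional recursion for a single sequence, and extract the leading constant from the $\sinh$-kernel appearing in Mirzakhani's integration formula. In either implementation the explicit $1/g$-expansions from Theorem \ref{M} and the integrality and polynomiality from Corollary \ref{M12} are exactly what is needed to determine the subleading constants $c_n$ once the leading $\frac{1}{\sqrt{g\pi}}$ factor is secured; the hard part will remain the rigorous extraction of that universal leading constant.
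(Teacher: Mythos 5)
You were asked to prove a statement that the paper itself does not prove: it is stated as a \emph{conjecture} of Zograf, included for context, and at the time of writing it was open (it was only later established by Mirzakhani and Zograf by other means). The closest the paper comes is Theorem \ref{Z} (the ratio limit $[\tau_{d_1}\cdots\tau_{d_n}]_{g,n}/V_{g,n}\to 1$) and the two-sided bound \eqref{eqM12}, which pins the ratio $V_{g,n+1}/\bigl((2g-2+n)V_{g,n}\bigr)$ between $12$ and $20\pi^2/(10-\pi^2)$ --- far from the precise asymptotics claimed in the conjecture. So there is no proof in the paper to compare yours against, and your proposal, judged on its own terms, is not a proof either; to your credit, you say so explicitly.

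The concrete gap is this: your reduction-in-$n$ step is plausible (its leading order is exactly Mirzakhani's Theorem \ref{M4}), but both of your proposed implementations of the ``base case'' leave unproven precisely the content of the conjecture, namely the existence of an asymptotic expansion with the factor $(4\pi^2)^{2g+n-3}(2g-3+n)!\,g^{-1/2}\pi^{-1/2}$. None of the tools in the paper can reach this: Proposition \ref{M2}, Theorem \ref{M}, and Corollaries \ref{M9} and \ref{M12} are all statements about \emph{ratios} of intersection numbers at the same or neighboring genus, in which the factorial growth, the power of $4\pi^2$, and the constant $1/\sqrt{\pi g}$ cancel identically; composing such ratio statements can never determine the absolute normalization, in particular the transcendental constant $1/\sqrt{\pi}$. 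Your first implementation --- expanding $\langle\tau_0\kappa_1^{3g-2}\rangle_g$ via \eqref{eqkmz} and invoking a saddle-point argument --- runs into an alternating sum over all partitions of $3g-2$, whose number of terms grows superexponentially in $g$ and whose individual terms dwarf the answer; controlling that cancellation requires uniform estimates on $\langle\tau_{d_1}\cdots\tau_{d_n}\rangle_g$ with $|\bold d|$ growing linearly in $g$, which is exactly what the paper's fixed-$\bold d$ results do not provide. Your second implementation (working inside Mirzakhani's recursion \eqref{eqM3}) likewise presupposes the two ratio estimates \emph{and} an extraction of the leading constant that no lemma here supplies. As it stands, your outline correctly locates the difficulty but does not overcome it: it reduces the conjecture to its own hardest special case.
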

Note that the asymptotic behavior of $V_{g,n}$ for fixed $g$ and
large $n$ has been determined by Manin and Zograf \cite{MZ}. Next We
recall Mirzakhani's work in \cite{Mir2}. We use the notation
introduced in Section \ref{secIn}. For $n\geq 0$, define
$$a_{n} =\zeta(2n) (1-2^{1-2n}).$$
We have the following properties of $a_n$.
\begin{lemma}\label{aa}{\rm\bf (Mirzakhani
\cite{Mir2})} $\{a_{n}\}_{n=1}^{\infty}$ is an increasing sequence.
Moreover we have $\lim_{n\rightarrow \infty} a_{n}=1,$ and
\begin{equation}\label{abound}
a_{n+1}-a_{n} \asymp 1/2^{2n}.
\end{equation}
Here $f_1(n) \asymp f_2(n)$ means
that there exists a constant $C>0$ independent of $n$ such that
$$\frac{1}{C} f_{2}(n) \leq f_1(n) \leq C f_{2}(n).$$
\end{lemma}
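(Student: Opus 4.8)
The plan is to recognize $a_n$ as a value of the Dirichlet eta function, which reduces all three assertions to elementary estimates on a single alternating series. Recall the classical identity $\eta(s)=\sum_{k=1}^{\infty}(-1)^{k-1}k^{-s}=(1-2^{1-s})\zeta(s)$. Setting $s=2n$ gives the key representation
$$a_n=\zeta(2n)(1-2^{1-2n})=\eta(2n)=\sum_{k=1}^{\infty}\frac{(-1)^{k-1}}{k^{2n}}=1-\frac{1}{2^{2n}}+\frac{1}{3^{2n}}-\cdots,$$
which makes every term transparent and is, in my view, the whole point of the lemma.

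For the limit I would use the standard tail bound for an alternating series whose terms decrease monotonically to $0$. Writing $a_n-1=\sum_{k\ge2}(-1)^{k-1}k^{-2n}$ and noting that $k^{-2n}$ decreases in $k$, the first omitted term controls the tail, so $|a_n-1|\le 2^{-2n}$. Hence $a_n\to1$, and in fact $a_n<1$ for every $n$.

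For monotonicity and the sharp rate at once, I would compute the consecutive difference directly from the series. Since the $k=1$ term cancels,
$$a_{n+1}-a_n=\sum_{k\ge2}(-1)^{k-1}\Big(\frac{1}{k^{2n+2}}-\frac{1}{k^{2n}}\Big)=\sum_{k\ge2}(-1)^{k}c_k,\qquad c_k:=\frac{1}{k^{2n}}-\frac{1}{k^{2n+2}}>0.$$
The crux is that $\{c_k\}_{k\ge2}$ is a positive \emph{decreasing} sequence. This follows from a one-variable check: with $\phi(x)=x^{-2n}-x^{-2n-2}$ one finds $\phi'(x)=x^{-2n-3}\big(2n+2-2nx^2\big)<0$ for $x\ge2$ and $n\ge1$, so $c_k=\phi(k)$ is strictly decreasing. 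The alternating series $c_2-c_3+c_4-\cdots$ then lies between its first two partial sums, giving
$$0<c_2-c_3\le a_{n+1}-a_n\le c_2.$$
In particular $a_{n+1}-a_n>0$, so $\{a_n\}$ is increasing. Finally $c_2=\tfrac34\,2^{-2n}$ while $c_3=\tfrac89\,3^{-2n}=o(c_2)$, so $c_2-c_3\ge C'\,2^{-2n}$ for some $C'>0$ and both bounds are of exact order $2^{-2n}$; this yields $a_{n+1}-a_n\asymp 1/2^{2n}$.

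The only genuine obstacle I anticipate is verifying that $\{c_k\}$ is decreasing, since this is exactly what legitimizes the alternating-series sandwich that simultaneously delivers the sign (monotonicity of $a_n$) and the two-sided estimate. Once $\phi'<0$ on $[2,\infty)$ is confirmed, the remainder is routine bookkeeping with $c_2$ and $c_3$.
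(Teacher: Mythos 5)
Your proof is correct, but there is nothing in the paper to compare it against: the paper states this lemma as a quoted result of Mirzakhani \cite{Mir2} and gives no proof at all, only using it later (e.g.\ in the proof of Lemma \ref{M10}, where the strict monotonicity of $\{a_n\}$ is invoked). Your argument is a complete, self-contained substitute. The identification $a_n=\zeta(2n)(1-2^{1-2n})=\eta(2n)=\sum_{k\geq1}(-1)^{k-1}k^{-2n}$ is the standard Dirichlet eta identity; the tail bound $|a_n-1|\leq 2^{-2n}$ gives the limit (and $a_n<1$); and the key step, the monotonicity of $c_k=k^{-2n}-k^{-2n-2}$ for $k\geq2$, is correctly verified via $\phi'(x)=x^{-2n-3}\left(2n+2-2nx^2\right)<0$ on $[2,\infty)$ for $n\geq1$, which legitimizes the alternating-series sandwich $c_2-c_3\leq a_{n+1}-a_n\leq c_2$. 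Since $c_2=\frac34\,2^{-2n}$ and $c_3=\frac89\,(4/9)^n\,2^{-2n}\leq\frac{32}{81}\,2^{-2n}$ for $n\geq1$, both bounds are of exact order $2^{-2n}$ with explicit uniform constants, which yields both the strict increase and \eqref{abound}. One minor remark: the paper's recursion \eqref{eqM3} also uses $a_0=\frac12$, but the lemma as stated concerns only $n\geq1$, so your restriction to $n\geq1$ in the derivative check is harmless.
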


We have the following differential form of Mirzakhani's recursion
formula \cite{Mir1, MS} (see also \cite{Sa, LX1, LX2, EO}).
\begin{equation} \label{eqM3}
[ \tau_{d_{1}},\ldots,\tau_{d_{n}}]_{g,n}=8\left(
\sum_{j=2}^{n}\mathcal{A}^{j}_{{\bf d}} + \mathcal{B}_{{\bf d}}+
 \mathcal{C}_{{\bf d}}\right),
\end{equation}
where
\begin{equation}\label{A}
 \mathcal{A}^{j}_{{\bf d}}= \sum_{L=0}^{d_{0}} (2d_{j}+1) \; a_{L} [\tau_{d_{1}+d_{j}+L-1}, \prod_{i\not=1,j} \tau_{d_{i}}]_{g,n-1},\end{equation}

\begin{equation}\label{B}
 \mathcal{B}_{{\bf d}}= \;\sum_{L=0}^{d_{0}} \sum_{k_{1}+k_{2}=L+d_{1}-2} a_{L} [\tau_{k_{1}} \tau_{k_{2}} \prod_{i\not=1} \tau_{d_{i}}]_{g-1,n+1},
 \end{equation}
 and
\begin{equation}\label{C}
\mathcal{C}_{{\bf d}}= \sum_{I \amalg J=\{2,\ldots,n\}\atop 0\leq g'
\leq g} \sum_{L=0}^{d_{0}} \sum_{k_{1}+k_{2}=L+d_{1}-2} a_{L} \;
[\tau_{k_{1}} \prod_{i\in I } \tau_{d_{i}}]_{g',|I|+1} \times [
\tau_{k_{2}} \prod_{i\in J} \tau_{d_{i}}]_{g-g',|J|+1}.
\end{equation}

\begin{lemma}
 Given ${\bf d}=(d_{1},\ldots, d_{n})$ and $g,n\geq0$, the following recursive formulas hold
\begin{equation}\label{kdv2}
 [ \tau_{0} \tau_{1} \prod_{i=1}^{n} \tau_{d_{i}}]_{g,n+2} = [\tau_{0}^{4} \prod_{i=1}^{n} \tau_{d_{i}}]_{g-1,n+4}
 +6\sum_{g_{1}+g_{2}=g \atop \{1,\ldots, n\}=I\amalg J} [ \tau_{0}^{2} \prod_{i\in I } \tau_{d_{i}}]_{g_{1},|I|+2} [\tau_{0}^{2}\prod_{i\in J}
 \tau_{d_{i}}]_{g_{2},|J|+2},
\end{equation}
\begin{equation} \label{dilaton}
(2g-2+n) [ \prod_{i=1}^{n} \tau_{d_{i}}]_{g,n}= \frac{1}{2}
\sum_{L\geq0} (-1)^{L} (L+1) \frac{\pi^{2L}}{(2L+3)!} [ \tau_{L+1}
\prod_{i=1}^{n} \tau_{d_{i}}]_{g,n+1},
\end{equation}
\begin{equation}\label{string}
\sum_{j=1}^n(2d_j+1)[\tau_{d_j-1}\prod_{i\neq
j}\tau_{d_i}]_{g,n}=\sum_{L\geq0}\frac{(-\pi^{2})^L}{4(2L+1)!}[
\tau_{L} \prod_{i=1}^{n} \tau_{d_{i}}]_{g,n+1}.
\end{equation}
\end{lemma}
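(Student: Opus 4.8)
The plan is to derive all three identities from the corresponding classical relations for $\psi$- and $\kappa_1$-intersection numbers, after unwinding the normalization in the definition \eqref{eqM2} of $[\,\cdot\,]_{g,n}$. Writing $P=\prod_i(2d_i+1)!!$ and recalling $d_0=3g-3+n-|\bold d|$, I would first substitute \eqref{eqM2} into both sides of each identity. The double factorial $(2L+1)!!$ (resp. $(2L+3)!!$) attached to the inserted $\tau_L$ (resp. $\tau_{L+1}$) combines with the weight $1/(2L+1)!$ (resp. $(L+1)/(2L+3)!$) and with the powers of $2$ and $\pi^2$ carried by $2^{2|\bold d|}(2\pi^2)^{d_0}/d_0!$ to collapse into a bare binomial coefficient. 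The upshot is that, after cancelling the common prefactor, \eqref{string} and \eqref{dilaton} are equivalent to the $\kappa_1$-decorated string and dilaton equations (with $D=d_0+1$, resp. $D=d_0$):
\begin{gather*}
\sum_j\langle\tau_{d_j-1}\prod_{i\neq j}\tau_{d_i}\kappa_1^{D}\rangle_g=\sum_{L\geq0}(-1)^L\binom{D}{L}\langle\tau_L\prod_i\tau_{d_i}\kappa_1^{D-L}\rangle_g,\\
(2g-2+n)\langle\prod_i\tau_{d_i}\kappa_1^{D}\rangle_g=\sum_{L\geq0}(-1)^L\binom{D}{L}\langle\tau_{L+1}\prod_i\tau_{d_i}\kappa_1^{D-L}\rangle_g,
\end{gather*}
while \eqref{kdv2}, with $D=3g-2+n-|\bold d|$, is equivalent to
\begin{multline*}
12\langle\tau_0\tau_1\prod_i\tau_{d_i}\kappa_1^{D}\rangle_g=\langle\tau_0^4\prod_i\tau_{d_i}\kappa_1^{D}\rangle_{g-1}\\
+6\sum_{\substack{g_1+g_2=g\\ I\amalg J=\{1,\dots,n\}}}\binom{D}{D_1}\langle\tau_0^2\prod_{i\in I}\tau_{d_i}\kappa_1^{D_1}\rangle_{g_1}\langle\tau_0^2\prod_{i\in J}\tau_{d_i}\kappa_1^{D_2}\rangle_{g_2},
\end{multline*}
where $D_1,D_2$ are the dimensionally forced $\kappa_1$-exponents on the two factors, $D_1+D_2=D$.

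For the first two reduced identities I would argue geometrically through the forgetful map $\pi\colon\overline{\mathcal M}_{g,n+1}\to\overline{\mathcal M}_{g,n}$. The key observation is that $\pi^{*}\kappa_1=\kappa_1-\psi_{n+1}$, whence
$$(\pi^{*}\kappa_1)^D=\sum_{L\geq0}(-1)^L\binom{D}{L}\psi_{n+1}^L\kappa_1^{D-L}.$$
Reading the inserted $\tau_L=\psi_{n+1}^L$ (resp. $\tau_{L+1}=\psi_{n+1}^{L+1}$) at the last point, the right-hand sides above become $\int_{\overline{\mathcal M}_{g,n+1}}(\pi^{*}\kappa_1)^D\prod_{i\leq n}\psi_i^{d_i}$ and $\int_{\overline{\mathcal M}_{g,n+1}}\psi_{n+1}(\pi^{*}\kappa_1)^D\prod_{i\leq n}\psi_i^{d_i}$. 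Since $\kappa_1^D$ is now pulled back, the projection formula pulls it out of the pushforward, and the class-level string and dilaton relations $\pi_{*}(\prod_{i\leq n}\psi_i^{d_i})=\sum_j\psi_j^{d_j-1}\prod_{i\neq j}\psi_i^{d_i}$ and $\pi_{*}(\psi_{n+1}\prod_{i\leq n}\psi_i^{d_i})=(2g-2+n)\prod_{i\leq n}\psi_i^{d_i}$ evaluate the remaining pushforward, reproducing exactly the two left-hand sides.

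For \eqref{kdv2} I would instead use the KMZ formula \eqref{eqkmz} to expand every power of $\kappa_1$ into $\tau$-insertions. On the left this turns $\langle\tau_0\tau_1\prod\tau_{d_i}\kappa_1^{D}\rangle_g$ into a weighted sum of $\langle\tau_0\tau_1\prod_i\tau_{d_i}\prod_{l=1}^p\tau_{c_l+1}\rangle_g$ over compositions $c_1+\dots+c_p=D$. To each term I would apply the dilaton equation to $\tau_1$ and then the KdV equation \eqref{kdv} to $\tau_0$. The crucial point is that the dilaton factor $2g-2+(1+n+p)$ and the KdV factor $2g+(n+p)-1$ coincide, so they merge into a single clean application of \eqref{kdv} to the $(n+p)$-pointed configuration with no leftover coefficient. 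Re-summing the resulting genus-$(g-1)$ terms by \eqref{eqkmz} in reverse reconstitutes $\tfrac1{12}\langle\tau_0^4\prod\tau_{d_i}\kappa_1^D\rangle_{g-1}$.

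The main obstacle is the separating term. After the combined dilaton/KdV step the auxiliary insertions $\tau_{c_l+1}$ are distributed across the node between the two stable components, and I must check that re-summing the KMZ weights together with this distribution exactly produces the decoration $\binom{D}{D_1}\kappa_1^{D_1}\otimes\kappa_1^{D_2}$. This amounts to a \emph{comultiplicativity} of \eqref{eqkmz}: writing $\binom{D}{c_1,\dots,c_p}=\binom{D}{D_1}\binom{D_1}{\bold c_{\mathrm{left}}}\binom{D_2}{\bold c_{\mathrm{right}}}$, splitting the sign $(-1)^{D-p}$ and the factor $1/p!$ across the two sides, and summing over which parts go left shows that the left and right sums are independently the KMZ expansions of $\kappa_1^{D_1}$ and $\kappa_1^{D_2}$ on the respective factors, with a single global $\binom{D}{D_1}$. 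Dimension counting forces the surviving exponents to be $D_1,D_2$, and collecting the prefactors ($12\cdot\tfrac1{12}=1$ and $12\cdot\tfrac12=6$) recovers \eqref{kdv2}. It is this bookkeeping across the node, rather than any further geometric input, that requires the most care.
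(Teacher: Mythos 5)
Your proof is correct, but the comparison here is lopsided: the paper does not prove this lemma at all. Immediately after stating it, the authors write that ``the above three equations in such forms were stated at Section 3 of \cite{Mir2}'' and rest entirely on that citation (Mirzakhani, in turn, attributes such identities to \cite{LX1} and \cite{MS}). So your argument supplies a self-contained derivation that the paper omits, using only ingredients the paper itself displays: the normalization \eqref{eqM2}, the KdV-type relation \eqref{kdv}, and the KMZ formula \eqref{eqkmz}, plus one standard geometric fact.

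On the substance, the two pillars of your argument check out. First, the reduction of \eqref{string} and \eqref{dilaton}: unwinding \eqref{eqM2}, the term $[\tau_L\prod_i\tau_{d_i}]_{g,n+1}$ carries $(2L+1)!!\,2^{2L}(2\pi^2)^{-L}$ relative to its $L=0$ companion, and against the weight $(-\pi^2)^L/(4(2L+1)!)$ this collapses (via $(2L+1)!!/(2L+1)!=2^{-L}/L!$) to $(-1)^L\binom{d_0+1}{L}$ after clearing $(d_0+1)!$; the analogous computation with $(L+1)(2L+3)!!/(2L+3)!=2^{-(L+1)}/L!$ gives your binomial form of \eqref{dilaton} with $D=d_0$. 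The geometric input is standard and correctly deployed: $\pi^*\kappa_1=\kappa_1-\psi_{n+1}$ turns the right-hand sides into $\pi_*\bigl((\pi^*\kappa_1)^D\prod_i\psi_i^{d_i}\bigr)$ and $\pi_*\bigl(\psi_{n+1}(\pi^*\kappa_1)^D\prod_i\psi_i^{d_i}\bigr)$, and the projection formula together with the class-level pushforwards $\pi_*(\prod_i\psi_i^{d_i})=\sum_j\psi_j^{d_j-1}\prod_{i\neq j}\psi_i^{d_i}$ and $\pi_*(\psi_{n+1}\prod_i\psi_i^{d_i})=(2g-2+n)\prod_i\psi_i^{d_i}$ (both consequences of $\psi_iD_{i,n+1}=0=\psi_{n+1}D_{i,n+1}$) finishes. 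Second, for \eqref{kdv2} your two key observations are exactly the right ones: the dilaton factor $2g-2+(n+p+1)$ produced by removing $\tau_1$ coincides with the prefactor $2g+(n+p)-1$ of \eqref{kdv}, so the pure-$\psi$ identity holds term-by-term in the KMZ expansion of $\kappa_1^D$; and the nodal bookkeeping works because $\binom{D}{c_1,\dots,c_p}=\binom{D}{D_1}\binom{D_1}{\,\cdot\,}\binom{D_2}{\,\cdot\,}$, $(-1)^{D-p}=(-1)^{D_1-p_1}(-1)^{D_2-p_2}$, and $\frac{1}{p!}\binom{p}{p_1}=\frac{1}{p_1!\,p_2!}$ let the separating sum factor into two independent reverse-KMZ expansions carrying a single global $\binom{D}{D_1}$, with dimension counting pinning $D_1,D_2$; the prefactors $12\cdot\frac{1}{12}=1$ and $12\cdot\frac{1}{2}=6$ match the stated coefficients. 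What each approach buys: the paper's citation keeps the note short; your derivation makes the lemma verifiable within the paper itself, at the cost of the multinomial bookkeeping across the node, which you identified correctly as the only delicate point.
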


The above three equations in such forms were stated at Section 3 of
\cite{Mir2}. Mirzakhani proved the following remarkable asymptotic
formulae based on the data computed by Zograf \cite{Zo}.
\begin{theorem}\label{M4}{\rm\bf (Mirzakhani
\cite{Mir2})} Let $n\geq 0$. Then we have
\begin{equation} \label{eqM11}
\frac{V_{g,n+1}} {2g V_{g,n}}= 4\pi^{2}+ O(1/g)
\end{equation}
and
\begin{equation}
\frac{V_{g,n}} {V_{g-1,n+2}}=1+O(1/g).
\end{equation}
\end{theorem}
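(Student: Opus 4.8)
The plan is to prove Theorem~\ref{M4} by using Mirzakhani's recursion \eqref{eqM3} to set up relations among the normalized bracket volumes $[\tau_{d_1}\cdots\tau_{d_n}]_{g,n}$, then extract the leading-order behavior in $g$. The two asserted estimates \eqref{eqM11} and the ratio $V_{g,n}/V_{g-1,n+2}=1+O(1/g)$ are intertwined, so I would prove them together by a simultaneous induction on $n$, establishing a uniform framework in which both the ``add a boundary component'' relation and the ``decrease genus, add two boundary components'' relation are controlled.

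\smallskip

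First I would specialize the recursion \eqref{eqM3} to the volumes themselves, i.e.\ take all $d_i=0$, and analyze the three pieces $\mathcal A^j_{\bf d}$, $\mathcal B_{\bf d}$, $\mathcal C_{\bf d}$. The key input is Lemma~\ref{aa}: since $a_L\to 1$ with $a_{L+1}-a_L\asymp 1/2^{2L}$, in each sum over $L$ the dominant contribution comes from replacing $a_L$ by $1$ up to exponentially small corrections, and the surviving geometric structure is governed by how the index shifts redistribute the total dimension $3g-3+n$. The term $\mathcal A^j$ relates $V_{g,n}$ to volumes with one fewer marked point (genus fixed), the term $\mathcal B$ relates it to the lower-genus volume $V_{g-1,n+2}$ (after using the structure of \eqref{B}), and $\mathcal C$ is a convolution over genus splittings that I expect to be lower-order because the product of two volumes of smaller genus is exponentially dominated, by Zograf-type growth $V_{g,n}\sim(4\pi^2)^{2g+n-3}(2g-3+n)!/\sqrt{g\pi}$, relative to the single volume of genus $g$. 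I would make the lower-order claim for $\mathcal C$ precise by comparing factorials: the stable term $g'=0$ or $g'=g$ gives the main part, while interior splittings lose a factor that decays like a reciprocal of a binomial-type ratio.

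\smallskip

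The second main step is to convert these bracket relations into the two stated ratios. For \eqref{eqM11}, I would use the dilaton-type equation \eqref{dilaton} to relate $V_{g,n+1}$ (a bracket with one extra $\tau$-insertion, summed against the coefficients $(-1)^L(L+1)\pi^{2L}/(2L+3)!$) back to $(2g-2+n)V_{g,n}$. The factor $4\pi^2$ should emerge from the leading term of that alternating series combined with the factor $8$ in \eqref{eqM3} and the normalization constants $2^{2|\bold d|}(2\pi^2)^{d_0}$ hidden in the definition \eqref{eqM2}; the dependence $2gV_{g,n}$ rather than $(2g-2+n)V_{g,n}$ is absorbed into the $O(1/g)$ error since the two differ by a relative $O(1/g)$. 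For the genus-reduction ratio I would isolate the $\mathcal B$ term and show it produces $V_{g-1,n+2}$ as the leading contribution to $V_{g,n}$, with the asserted $1+O(1/g)$ following once $\mathcal A$ and $\mathcal C$ are bounded appropriately.

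\smallskip

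I expect the main obstacle to be controlling the error terms uniformly, rather than deriving the leading constants. Specifically, the hard part will be showing that the convolution term $\mathcal C_{\bf d}$ and the higher-$L$ tails of the sums in $\mathcal A$, $\mathcal B$ contribute only $O(1/g)$ relative to the main term, uniformly across the induction on $n$; this requires a priori two-sided bounds on $V_{g,n}$ of the expected order of magnitude (so that products $V_{g',\ast}V_{g-g',\ast}$ can be estimated) before one knows the precise asymptotics. I would handle this by first establishing crude upper and lower bounds of the form $V_{g,n}\asymp (4\pi^2)^{2g}(2g)!\,g^{c}$ via the same recursion, then bootstrapping: the bounds feed into estimating $\mathcal C$, which in turn sharpens the ratios to the claimed $1+O(1/g)$. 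The delicate point is that the sum defining $\mathcal C$ ranges over all $0\le g'\le g$ and all subsets $I\amalg J$, so the bound must beat a factor that is superpolynomial in the number of terms; the factorial growth of the volumes is exactly what makes the boundary splittings $g'\in\{0,g\}$ dominate and the interior ones negligible.
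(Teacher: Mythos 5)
The paper itself contains no proof of this statement: Theorem \ref{M4} is quoted verbatim from Mirzakhani \cite{Mir2} and is used later as a black-box input (notably in the proof of Theorem \ref{Z}). So your proposal can only be measured against Mirzakhani's own argument, whose general outline (recursion \eqref{eqM3}, Lemma \ref{aa}, the dilaton/string-type identities, smallness of genus-splitting sums) your sketch does follow. But the sketch has genuine gaps. The first concerns the constant $4\pi^2$. Your claim that it ``emerges from the leading term of the alternating series combined with the factor $8$ in \eqref{eqM3} and the normalization constants'' is wrong: the factor $8$ and the normalizations play no role here. Setting all $d_i=0$ in \eqref{dilaton} gives
\begin{equation*}
(2g-2+n)\,V_{g,n}=\frac12\sum_{L\geq0}(-1)^L(L+1)\frac{\pi^{2L}}{(2L+3)!}\,[\tau_{L+1}\tau_0^n]_{g,n+1},
\end{equation*}
and keeping only the leading term $L=0$ produces the constant $\tfrac1{12}$ --- exactly how the paper derives $V_{g,n+1}>12(2g-2+n)V_{g,n}$ in \eqref{eqM12} --- not $\tfrac1{4\pi^2}\approx\tfrac1{39.5}$. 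To get $4\pi^2$ you must sum the \emph{entire} series, using
$\frac12\sum_{L\geq0}(-1)^L(L+1)\frac{x^{2L}}{(2L+3)!}=\frac{\sin x-x\cos x}{4x^3}$, which equals $\frac{1}{4\pi^2}$ at $x=\pi$, \emph{and} you must know that $[\tau_{L+1}\tau_0^n]_{g,n+1}=(1+o(1))V_{g,n+1}$ for each fixed $L$ (tails being handled by Corollary \ref{M3} and the super-exponential decay of the coefficients). That bracket-to-volume comparison is precisely Theorem \ref{Z}, which in this paper is deduced \emph{from} Theorem \ref{M4}; so your ``simultaneous induction'' must carry the ratios $[\tau_{d_1}\cdots\tau_{d_n}]_{g,n}/V_{g,n}\to1$ as part of the induction hypothesis, and the proposal never specifies an induction scheme that actually closes --- this is where the real content of Mirzakhani's Theorem 3.5 lies.

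The second gap is the splitting term. You correctly flag that bounding $\mathcal C_{\bf d}$ via Zograf's asymptotics would be circular (it is stated as a conjecture here), and you propose to bootstrap from crude two-sided bounds $V_{g,n}\asymp(4\pi^2)^{2g}(2g)!\,g^c$ obtained ``via the same recursion.'' But establishing a factorial-strength lower bound and deducing $\sum_{g_1+g_2=g}[{\bf x}]_{g_1,l}[{\bf y}]_{g_2,m}=o(V_{g,n-2})$ (Lemma \ref{M5}, i.e.\ Mirzakhani's Lemma 3.3) is the hard quantitative core of the whole theorem; it occupies the bulk of Section 3 of \cite{Mir2} and is not a routine consequence of the recursion, so it cannot be deferred as a technicality. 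Relatedly, your heuristic that ``the dominant contribution comes from replacing $a_L$ by $1$ up to exponentially small corrections'' misreads the structure: $a_0=\tfrac12$ and $a_1=\tfrac{\pi^2}{12}$ are far from $1$, and the sums are dominated by small $L$ because the brackets decrease in $L$ (Lemma \ref{M10}) and genuinely decay once the index grows like $g$ (cf.\ $a_{g,3g-2}/a_{g,0}\to0$); what enters the error estimates is the increment bound \eqref{abound}, not a blanket replacement $a_L\approx1$. Finally, for the genus-reduction ratio, isolating $\mathcal B_{\bf d}$ in \eqref{eqM3} forces you through a double sum over $L$ and $k_1+k_2=L+d_1-2$; the cleaner route (and Mirzakhani's) is \eqref{kdv2} with all $d_i=0$, which gives $[\tau_1\tau_0^{n+1}]_{g,n+2}=V_{g-1,n+4}+6\sum(\text{splittings})$ directly, after which the same bracket-to-volume comparison and Lemma \ref{M5} finish the argument.
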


Following Mirzakhani's notation, denote
$$[{\bf x}]_{g,n}:= [\tau_{x_{1}}\ldots \tau_{x_{n}}]_{g,n},$$
where ${\bf x}=(x_{1},\ldots,x_{n}).$

\begin{lemma}\label{M5}{\rm\bf (Mirzakhani
\cite{Mir2})} In terms of the above notation, for ${\bf
x}=(x_{1},\ldots, x_{l}),$ and ${\bf y}=(y_{1},\ldots, y_{m})$, we
have
\begin{equation}\label{eqM8}
\sum_{g_{1}+g_{2}=g} [{\bf x}]_{g_{1},l} \times [{\bf
y}]_{g_{2},m}=o(V_{g,n-2}),
\end{equation}
where $n=l+m$.
\end{lemma}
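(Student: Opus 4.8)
The plan is to reduce the claim to an estimate for \emph{products of volumes} and then exploit the factorial-type growth of $V_{g,n}$ in $g$ to show that such products, once summed over all genus splittings $g_1+g_2=g$, are negligible against $V_{g,n-2}$. For the reduction I would invoke the standard monotonicity bound $[{\bf x}]_{g,n}\le V_{g,n}$, valid for every multi-index ${\bf x}$ of length $n$ (setting all $\tau$-indices to zero only increases the coefficient; this is established in Mirzakhani's work). Granting it, and discarding the unstable terms (those with $2g_i-2+\#\text{points}\le 0$, which vanish), it suffices to prove
\[
\sum_{g_1+g_2=g} V_{g_1,l}\,V_{g_2,m}=o\big(V_{g,l+m-2}\big).
\]

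Next I would read off the growth rate of $V_{g,n}$ from Theorem \ref{M4}. The two relations there say that adding a marked point multiplies the volume by a factor $\asymp g$ and that $V_{g,n}\asymp V_{g-1,n+2}$; combining them gives $V_{g,n}/V_{g-1,n}\asymp g^2$, so that $V_{g,n}$ grows like $(\mathrm{const})^{g}(g!)^2$ up to lower-order factors. The qualitative consequence is all I need: the sequence grows fast enough that the product $V_{g_1,l}V_{g_2,m}$ with $g_1+g_2=g$ is largest when one of $g_1,g_2$ is bounded and decays super-exponentially as the split approaches the middle $g_1\approx g/2$.

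The heart of the argument is the summation, which I would split into the boundedly many \emph{extreme} indices, where one of $g_1,g_2$ stays $O(1)$, and the \emph{bulk}, where both tend to infinity. For a fixed small $g_1$ the factor $V_{g_1,l}$ is a constant, and comparing $V_{g-g_1,m}$ with $V_{g,l+m-2}$ through the scaling laws (a bounded genus shift together with a deficit of two marked points) yields each extreme term $=O(1/g)\cdot V_{g,l+m-2}$; here the stability constraint is essential, since it forbids the degenerate splittings that would otherwise produce an $O(1)$ term. In the bulk both volumes are genuinely large and the factorial growth in $g$ forces $V_{g_1,l}V_{g_2,m}/V_{g,l+m-2}$ to decay like the reciprocal of a large binomial coefficient, of central-binomial size $\asymp 4^{-g}$ near $g_1\approx g/2$, so the entire bulk contributes only $O(4^{-g})$. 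Adding the two pieces, the extremes dominate and the total is $O(1/g)=o(1)$, which proves the lemma (in fact with rate $O(1/g)$).

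The main obstacle I anticipate is uniformity. Theorem \ref{M4} supplies the volume ratios only \emph{asymptotically} as $g\to\infty$, whereas the sum runs over every $g_1$ from $0$ to $g$ simultaneously; one therefore needs a two-sided bound on $V_{g_1,l}V_{g_2,m}/V_{g,l+m-2}$ that is \emph{uniform} in the splitting, so as to control the bulk terms and to see that $\sum_{g_1}$ of the binomial reciprocals is $O(1)$. Making this precise --- turning the asymptotic ratios of Theorem \ref{M4} into honest inequalities valid for all $g$, possibly by feeding them back into the recursion \eqref{eqM3}, whose term $\mathcal{C}_{{\bf d}}$ is itself a sum of exactly this shape, and running an induction --- is where the genuine work lies.
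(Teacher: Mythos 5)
Your reduction via Corollary \ref{M3} and your extremes-versus-bulk picture of the sum are qualitatively right (each fixed-$g_1$ term is indeed $O(1/g)V_{g,n-2}$, with stability killing the degenerate splittings), but the uniformity problem you flag in your last paragraph is not a technical loose end --- it is the entire content of the lemma, and your argument does not close without it. Theorem \ref{M4} is a fixed-$n$, $g\to\infty$ statement: it can be iterated a \emph{bounded} number of times to control a single extreme term, but comparing $V_{g_1,l}V_{g_2,m}$ with $V_{g,l+m-2}$ in the bulk requires iterating the genus-reducing relation $\Theta(g)$ times, through statements whose error terms and implied constants depend on the (changing) number of marked points; nothing quoted in the paper controls how these errors compound. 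The ``factorial-type growth'' and the reciprocal-central-binomial decay $\asymp 4^{-g}$ you invoke are exactly Zograf-conjecture-strength uniform asymptotics, which are not available. The inequalities that \emph{are} uniform in all parameters, such as \eqref{eqM12}, move marked points rather than genus, and their constants ($12$ versus $20\pi^2/(10-\pi^2)\approx 1514$) are far too loose to exhibit any super-exponential bulk decay. So, as written, your proposal establishes only the boundedly many extreme terms; the sum over all $\Theta(g)$ splittings --- which is the assertion --- remains unproved.

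For comparison: the paper itself does not prove this lemma at all; it quotes it as a weaker form of Lemma 3.3 of \cite{Mir2}. Mirzakhani's mechanism is worth internalizing because it sidesteps uniformity entirely: by positivity, the recursion \eqref{kdv2} performs the summation over all splittings \emph{exactly}, rather than term by term. Taking all $d_i=0$ in \eqref{kdv2} (with $l+m-4$ such indices, $l,m\ge2$) gives the identity
\begin{equation*}
6\sum_{\substack{g_{1}+g_{2}=g\\ I\amalg J=\{1,\ldots,l+m-4\}}} V_{g_{1},|I|+2}\,V_{g_{2},|J|+2}
\;=\;[\tau_{1}\tau_{0}^{\,l+m-3}]_{g,l+m-2}-V_{g-1,l+m},
\end{equation*}
a difference of two single-surface quantities. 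By Lemma \ref{M10} the first is at most $V_{g,l+m-2}$, and by one application of Theorem \ref{M4} (fixed number of points, so legitimate) $V_{g-1,l+m}=(1+O(1/g))V_{g,l+m-2}$; since the left-hand side is nonnegative, it is $O(V_{g,l+m-2}/g)$. Keeping only the partitions with $|I|=l-2$ and using $[{\bf x}]_{g_1,l}\le V_{g_1,l}$, $[{\bf y}]_{g_2,m}\le V_{g_2,m}$ from Corollary \ref{M3} yields \eqref{eqM8}, with the stronger rate $O(1/g)$, for $l,m\ge2$; the cases $l\le 1$ or $m\le 1$ need a further reduction via \eqref{dilaton} and \eqref{string} but no new idea. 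The moral difference from your approach: let the recursion do the summation over $g_1$, and then no estimate uniform in the splitting is ever required --- which is precisely the obstacle you could not overcome.
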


The above lemma is a weaker form of Lemma 3.3 in \cite{Mir2}.

\begin{lemma} \label{M10} When $d_1>0$, we have
\begin{equation} \label{eqM4}
[\tau_{d_1}\cdots\tau_{d_n}]_{g,n}<
[\tau_{d_1-1}\tau_{d_2}\cdots\tau_{d_n}]_{g,n}.
\end{equation}

\end{lemma}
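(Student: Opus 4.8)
The plan is to compare the two numbers by applying Mirzakhani's recursion \eqref{eqM3} to each of them and matching the resulting sums term by term, the only analytic input being the positivity and monotonicity of the constants $a_L$ from Lemma \ref{aa}. Write $(d_1,\dots,d_n)$ for the left-hand index and $(d_1-1,d_2,\dots,d_n)$ for the right-hand one, and note that lowering $d_1$ by one raises $d_0=3g-3+n-|\mathbf d|$ by one, so the inner sums $\sum_{L=0}^{d_0}$ in \eqref{A}, \eqref{B}, \eqref{C} become $\sum_{L=0}^{d_0+1}$. Throughout I use that every bracket $[\tau_{e_1}\cdots\tau_{e_m}]_{g',m}$ occurring on the right-hand side of the recursion is non-negative, and is strictly positive whenever $(g',m)$ is stable and all $e_i\ge 0$; this is the positivity of the coefficients of Mirzakhani's volume polynomials (alternatively, it follows from \eqref{eqM3} itself by an easy induction, since all terms there carry the positive factor $8$).

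The key manipulation is a reindexing $L\mapsto L+1$. For instance, in $\mathcal{A}^j$ the summand attached to $(d_1-1,\dots)$ is $(2d_j+1)a_L[\tau_{d_1+d_j+L-2},\prod_{i\ne1,j}\tau_{d_i}]_{g,n-1}$; substituting $L=M+1$ turns it into $(2d_j+1)a_{M+1}[\tau_{d_1+d_j+M-1},\dots]_{g,n-1}$ with $M$ ranging from $-1$ to $d_0$, so its brackets coincide \emph{exactly} with those of $\mathcal{A}^j_{(d_1,\dots,d_n)}$ save for a single extra term at $M=-1$. Hence
\[
\mathcal{A}^{j}_{(d_1-1,\dots)}-\mathcal{A}^{j}_{(d_1,\dots)}=(2d_j+1)\,a_0\,[\tau_{d_1+d_j-2},\dots]_{g,n-1}+\sum_{M=0}^{d_0}(2d_j+1)(a_{M+1}-a_M)\,[\tau_{d_1+d_j+M-1},\dots]_{g,n-1}.
\]
Since $a_0=\tfrac12>0$ and $a_{M+1}>a_M$ for every $M\ge0$ — this is Lemma \ref{aa} for $M\ge1$, together with the direct inequality $a_0=\tfrac12<\tfrac{\pi^2}{12}=a_1$ for $M=0$ — and every bracket is non-negative, this difference is $\ge0$. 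The identical shift applied to $\mathcal{B}$ and to each summand of $\mathcal{C}$ produces, verbatim, differences of the shape $a_0(\text{bracket})+\sum_M(a_{M+1}-a_M)(\text{bracket})\ge0$. Multiplying through by $8$ then gives $[\tau_{d_1-1}\tau_{d_2}\cdots\tau_{d_n}]_{g,n}\ge[\tau_{d_1}\cdots\tau_{d_n}]_{g,n}$.

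It remains to upgrade $\ge$ to $>$, which is where the care lies. For $n\ge2$ the $M=0$ term of any $\mathcal{A}^j$ already suffices: its coefficient $a_1-a_0$ is positive and the bracket $[\tau_{d_1+d_j-1},\prod_{i\ne1,j}\tau_{d_i}]_{g,n-1}$ is strictly positive, because $d_1+d_j-1\ge0$ (using $d_1\ge1$) and the reduced pair $(g,n-1)$ is stable in every case that carries content. For $n=1$ there is no $\mathcal{A}$-term, and I would instead extract the strict contribution from $\mathcal{B}$: its $M=0$ term $(a_1-a_0)\sum_{k_1+k_2=d_1-2}[\tau_{k_1}\tau_{k_2}]_{g-1,2}$ is positive when $d_1\ge2$, while for $d_1=1$ the $M=1$ term $(a_2-a_1)[\tau_0\tau_0]_{g-1,2}$ does the job, both needing $(g-1,2)$ stable, i.e. $g\ge2$. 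The single exceptional pair is $(g,n)=(1,1)$, to which \eqref{eqM3} does not apply; there the only instance with $d_1>0$ is $[\tau_1]_{1,1}=\tfrac12<\tfrac{\pi^2}{12}=[\tau_0]_{1,1}$, which I would check directly from \eqref{eqM2}. I expect this strictness bookkeeping, together with isolating the lone base case $(1,1)$, to be the only genuine obstacle; once the shift $L\mapsto L+1$ is spotted, the comparison itself is routine.
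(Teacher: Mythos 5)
Your proposal is correct and takes essentially the same route as the paper's proof: both expand the two brackets via Mirzakhani's recursion \eqref{eqM3}, match the resulting terms (your shift $L\mapsto L+1$), and conclude from the positivity of all brackets together with the strict monotonicity of the $a_L$ from Lemma \ref{aa}. Your extra bookkeeping — the leftover $M=-1$ term, the separate check $a_0<a_1$, the strictness analysis, and the direct verification of the case $(g,n)=(1,1)$ — simply fills in details that the paper's one-paragraph proof leaves implicit.
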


\begin{proof} We expand both sides of the inequalities using
\eqref{eqM3}. Since each term in $\mathcal{A}^{j}_{\bf d},
\mathcal{B}_{\bf d}, \mathcal{C}_{\bf d}$ is positive, by comparing
corresponding terms in the expansion, the inequality \eqref{eqM4}
follows from Lemma \ref{aa} that $\{a_{n}\}_{n=1}^{\infty}$ is a
strictly increasing sequence.
\end{proof}

\begin{corollary} \label{M3}
For any fixed set $\bold d=(d_1,\dots, d_n)$ of non-negative
integers, we have
\begin{equation} \label{eqM7}
[\tau_{d_1}\cdots\tau_{d_n}]_{g,n}\leq V_{g,n}.
\end{equation}
\end{corollary}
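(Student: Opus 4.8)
The plan is to prove the inequality $[\tau_{d_1}\cdots\tau_{d_n}]_{g,n}\leq V_{g,n}$ by reducing the left-hand side to $V_{g,n}=[\tau_0\cdots\tau_0]_{g,n}$ through repeated application of the monotonicity established in Lemma \ref{M10}. The key observation is that Lemma \ref{M10} lets us decrease any single positive index $d_i$ by one while strictly increasing (or at least not decreasing) the value of the bracket. Since all of the indices $d_1,\dots,d_n$ are non-negative integers, we can lower them one step at a time until every index equals zero, and each step can only increase the value.

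\begin{proof}
We argue by induction on $|\bold d|=d_1+\cdots+d_n$. If $|\bold d|=0$, then $d_1=\cdots=d_n=0$ and $[\tau_{d_1}\cdots\tau_{d_n}]_{g,n}=V_{g,n}$, so the inequality holds with equality. Now suppose $|\bold d|>0$. Then some index is positive; after relabeling we may assume $d_1>0$. By Lemma \ref{M10},
$$[\tau_{d_1}\tau_{d_2}\cdots\tau_{d_n}]_{g,n}< [\tau_{d_1-1}\tau_{d_2}\cdots\tau_{d_n}]_{g,n}.$$
The index set $(d_1-1,d_2,\dots,d_n)$ has total weight $|\bold d|-1<|\bold d|$, so by the induction hypothesis the right-hand side is bounded above by $V_{g,n}$. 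Combining the two inequalities yields $[\tau_{d_1}\cdots\tau_{d_n}]_{g,n}\leq V_{g,n}$, completing the induction.
\end{proof}

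The argument is almost entirely bookkeeping once Lemma \ref{M10} is in hand, so I expect no genuine obstacle. The only point requiring mild care is the base case and the choice of induction variable: one must induct on $|\bold d|$ rather than on a single coordinate, since Lemma \ref{M10} only permits decreasing an index that is strictly positive, and after each decrement a \emph{different} coordinate may be the positive one. Using $|\bold d|$ as the induction parameter handles this uniformly, because each application of Lemma \ref{M10} strictly lowers the total weight by exactly one.
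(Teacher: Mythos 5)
Your proof is correct and is exactly the argument the paper intends: the paper states the corollary without proof because it follows immediately by iterating Lemma \ref{M10} (using the symmetry of $[\tau_{d_1}\cdots\tau_{d_n}]_{g,n}$ in the indices) until all indices reach zero, where $[\tau_0\cdots\tau_0]_{g,n}=V_{g,n}$. Your induction on $|\bold d|$ is a clean way to organize precisely this iteration.
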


We can now prove the following Zograf's conjecture \cite{Zo} giving
large genus ratio of Weil-Peterson volumes and intersection numbers
involving $\psi$-classes. The proof is essentially due to Mirzakhani
\cite{Mir2}.
\begin{theorem} \label{Z}
For any fixed $n>0$ and a fixed set $\bold d=(d_1,\cdots,d_n)$ of
non-negative integers, we have
\begin{equation}\label{eqZ}
\lim_{g\rightarrow\infty}\frac{[\tau_{d_1}\cdots\tau_{d_n}]_{g,n}}{V_{g,n}}=1.
\end{equation}
\end{theorem}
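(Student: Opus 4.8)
The goal is to show that, for a fixed $n>0$ and a fixed multi-index $\bold d$, the ratio $[\tau_{d_1}\cdots\tau_{d_n}]_{g,n}/V_{g,n}$ tends to $1$ as $g\to\infty$. Since $V_{g,n}=[\tau_0\cdots\tau_0]_{g,n}$, Lemma~\ref{M10} already gives us monotonicity: lowering any index $d_i$ by one strictly increases the quantity, so iterating yields $[\tau_{d_1}\cdots\tau_{d_n}]_{g,n}\le V_{g,n}$ (this is Corollary~\ref{M3}). Hence the ratio is bounded above by $1$, and the whole problem reduces to producing a matching lower bound that converges to $1$.

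My strategy is to run Mirzakhani's recursion \eqref{eqM3} on both the numerator $[\tau_{d_1}\cdots\tau_{d_n}]_{g,n}$ and the denominator $V_{g,n}=[\tau_0\cdots\tau_0]_{g,n}$, and to argue that the two expansions are asymptotically identical term-by-term. The key structural fact is that in \eqref{eqM3} the dominant contribution comes from the $\mathcal A$-terms: the $\mathcal B$-term drops the genus by one while raising $n$, and the $\mathcal C$-term splits into a product over $g_1+g_2=g$, so by Lemma~\ref{M5} both $\mathcal B_{\bf d}$ and $\mathcal C_{\bf d}$ are of lower order, namely $o(V_{g,n-2})$, which is negligible compared to $V_{g,n}$ once we know (via Theorem~\ref{M4}) that $V_{g,n}\asymp (4\pi^2)^{2} 2g\cdot(2g-1)\,V_{g,n-2}$ grows by a factor of order $g^2$ at each step down in $n$. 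So after one application of the recursion, both the numerator and the denominator are, up to an $o(V_{g,n})$ error, governed entirely by their $\mathcal A$-terms, which are sums of $(n-1)$-point brackets at the same genus $g$.

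The inductive engine is therefore: assume the theorem holds for all point-counts smaller than $n$ (the base case $n$ small, or $|\bold d|=0$, being trivial). Writing out $8\sum_{j=2}^n\mathcal A^j_{\bf d}$ explicitly, each summand is $(2d_j+1)\,a_L\,[\tau_{d_1+d_j+L-1}\prod_{i\ne 1,j}\tau_{d_i}]_{g,n-1}$, an $(n-1)$-point bracket with a fixed index set. By the induction hypothesis every such $(n-1)$-point bracket is asymptotic to $V_{g,n-1}$, and the same recursion applied to $V_{g,n}$ produces the analogous $\mathcal A$-terms with all $d_i=0$, which are likewise asymptotic to $V_{g,n-1}$. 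The coefficients $(2d_j+1)a_L$ and the range of $L$ must then be shown to match in the limit; here the crucial input is Lemma~\ref{aa}, namely that $a_L\to 1$ and $a_{L+1}-a_L\asymp 2^{-2L}$, so that the weighted sums $\sum_L a_L(\cdots)$ in the numerator and denominator differ only by boundary effects that vanish relative to the total. Comparing the leading coefficients carefully should force the ratio of the two $\mathcal A$-sums to $1$.

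The main obstacle I anticipate is bookkeeping the $L$-summations in the $\mathcal A$-terms: the upper limit $d_0=3g-3+n-|\bold d|$ differs between numerator and denominator (by exactly $|\bold d|$), and the individual brackets $[\tau_{d_1+d_j+L-1}\cdots]$ carry an internal index that grows with $L$, so I cannot directly invoke the fixed-index induction hypothesis for every $L$ at once. The resolution I would pursue is to show that the sum over $L$ is dominated by a bounded range of small $L$ (because the increments $a_{L+1}-a_L$ decay geometrically by \eqref{abound} and, by Corollary~\ref{M3}, each bracket is bounded by $V_{g,n-1}$), so that only finitely many terms — each with a fixed index to which induction applies — contribute in the limit, while the tail is uniformly $o(V_{g,n})$. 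Making this tail estimate uniform in $g$, and reconciling the shifted summation ranges, is the delicate step that ties the whole argument together.
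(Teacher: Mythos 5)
Your upper bound $[\tau_{d_1}\cdots\tau_{d_n}]_{g,n}\le V_{g,n}$ and your diagnosis of where the difficulty lies are both correct, but the two estimates you rely on to overcome it fail. First, Lemma \ref{M5} applies only to sums of \emph{products} over genus splittings, i.e.\ to terms of the shape $\mathcal C_{\bf d}$; the term $\mathcal B_{\bf d}$ is a double sum of single brackets $[\tau_{k_1}\tau_{k_2}\prod_{i\neq 1}\tau_{d_i}]_{g-1,n+1}$, and by Theorem \ref{M4} already one such bracket with bounded indices is comparable to $V_{g,n-1}\gg V_{g,n-2}$, so $\mathcal B_{\bf d}$ is not $o(V_{g,n-2})$; indeed for $n=1$ there are no $\mathcal A$-terms at all, so $\mathcal B$ must carry essentially all of $V_{g,1}$ --- which also means your induction on the number of points has no usable base case, since the $n=1$ statement is not trivial and cannot be reduced to fewer points. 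Second, and decisively, your plan to dominate the $L$-summation in $\mathcal A^j_{\bf d}$ by a bounded range of $L$ is exactly backwards. Equation \eqref{abound} gives geometric decay of the \emph{increments} $a_{L+1}-a_L$, not of the weights: by Lemma \ref{aa} the $a_L$ increase to $1$. Hence, by Corollary \ref{M3} and Theorem \ref{M4}, for any fixed $M$ the head of the sum satisfies $\sum_{L=0}^{M}a_L[\tau_{d_1+d_j+L-1}\cdots]_{g,n-1}=O(M\,V_{g,n-1})=O(V_{g,n}/g)=o(V_{g,n})$: every bounded range of $L$ contributes \emph{nothing} in the limit, and the whole mass of the recursion sits in the tail where $L$ grows with $g$. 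In that regime the fixed-index induction hypothesis gives no control --- the behavior of $[\tau_m\cdots]_{g,n-1}$ for $m$ growing with $g$ is precisely Mirzakhani's open question, only partially answered near the top index by Theorem \ref{M} --- and note also that matching terms of equal $L$ in your two expansions would produce the spurious factor $(2d_j+1)$ rather than $1$.

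The paper's proof avoids both problems by changing the induction and by never estimating the full expansions at all. It inducts on $|\bold d|$ with $n$ fixed (base case $|\bold d|=0$ is the tautology $V_{g,n}/V_{g,n}=1$) and reduces the inductive step to showing that the \emph{difference} $[\tau_{d_1-1}\tau_{d_2}\cdots\tau_{d_n}]_{g,n}-[\tau_{d_1}\cdots\tau_{d_n}]_{g,n}$ is $o(V_{g,n})$. Expanding both brackets by \eqref{eqM3} and subtracting, the two $L$-sums align after shifting $L$ by one, so the weights $a_L$ telescope into the increments $a_{L+1}-a_L$; these are geometrically small by \eqref{abound}, each bracket is bounded by the corresponding volume via Corollary \ref{M3}, so the $\mathcal A$- and $\mathcal B$-contributions to the difference are $O(V_{g,n-1})$ and $O(V_{g-1,n+1})$ respectively, both $O(V_{g,n}/g)$ by Theorem \ref{M4}, while Lemma \ref{M5} disposes of the $\mathcal C$-terms. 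This telescoping of differences is the key device your proposal is missing: it converts the weights, which do not decay, into increments, which do, and thereby sidesteps any need to understand the bulk of the $L$-sum.
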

\begin{proof} We use induction on $|\bold
d|$. We need only prove the following limit equation
\begin{equation} \label{eqM5}
\lim_{g\rightarrow\infty}\Bigg|\frac{[\tau_{d_1}\cdots\tau_{d_n}]_{g,n}}{[\tau_{d_1-1}\tau_{d_2}\cdots\tau_{d_n}]_{g,n}}-1\Bigg|=0
\end{equation}
By induction, we may assume
\begin{equation}
\lim_{g\rightarrow\infty}\frac{[\tau_{d_1-1}\tau_{d_2}\cdots\tau_{d_n}]_{g,n}}{V_{g,n}}=1.
\end{equation}

So in order to prove \eqref{eqM5}, we need only prove that
\begin{equation}\label{eqM6}
\lim_{g\rightarrow\infty}\Bigg|\frac{[\tau_{d_1-1}\tau_{d_2}\cdots\tau_{d_n}]_{g,n}-[\tau_{d_1}\cdots\tau_{d_n}]_{g,n}}{V_{g,n}}\Bigg|=0.
\end{equation}
By comparing each term in Mirzakhani's recursion formula
\eqref{eqM3} for $[\tau_{d_1-1}\tau_{d_2}\cdots\tau_{d_n}]_{g,n}$
and $[\tau_{d_1}\cdots\tau_{d_n}]_{g,n}$, this actually follows from
\eqref{eqM7}, \eqref{abound}, Theorem \ref{M4} and Lemma \ref{M5}.
The argument is similar to the proof of Theorem 3.5 in \cite{Mir2}.
We omit the details.
\end{proof}

\begin{remark}
We thank Mirzakhani \cite{Mir3} for pointing out that Zograf was
able to prove Theorem \ref{Z} using the method of \cite{MZ}.
\end{remark}

\begin{lemma}\label{M11}
When $3g+n-2>0$, we have
\begin{equation} \label{eqM13}
V_{g,n+1}\leq\frac{\pi^2}{6}[\tau_1\tau_0^n]_{g,n+1}.
\end{equation}
The equality holds only when $(g,n)=(0,3)$ or $(1,0)$.
\end{lemma}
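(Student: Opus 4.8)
The plan is to derive an exact expansion of $V_{g,n+1}$ as a finite alternating series and then control its tail. First I would apply the string equation \eqref{string} to the configuration $\mathbf d=(0,\dots,0)$ with $n$ entries. Since $\tau_{-1}=0$, every term on the left-hand side vanishes, and the identity collapses to
\[
0=\sum_{L\geq0}\frac{(-\pi^2)^L}{4(2L+1)!}[\tau_L\tau_0^n]_{g,n+1}.
\]
Solving for the $L=0$ term, which equals $\tfrac14 V_{g,n+1}$, yields
\begin{align*}
V_{g,n+1}&=\sum_{L\geq1}\frac{(-1)^{L+1}\pi^{2L}}{(2L+1)!}[\tau_L\tau_0^n]_{g,n+1}\\
&=\frac{\pi^2}{6}[\tau_1\tau_0^n]_{g,n+1}+\sum_{L\geq2}\frac{(-1)^{L+1}\pi^{2L}}{(2L+1)!}[\tau_L\tau_0^n]_{g,n+1}.
\end{align*}
This sum is finite because $[\tau_L\tau_0^n]_{g,n+1}=0$ once $L>3g-2+n$. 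Thus the lemma reduces to showing that the tail over $L\geq2$ is nonpositive.

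Writing $c_L=\pi^{2L}/(2L+1)!$ and $T_L=[\tau_L\tau_0^n]_{g,n+1}$, the tail equals $-\sum_{L\geq2}(-1)^L c_L T_L$. I would establish two monotonicity facts. For the coefficients, $c_{L+1}/c_L=\pi^2/\bigl((2L+2)(2L+3)\bigr)<1$ for every $L\geq1$, since $(2L+2)(2L+3)\geq20>\pi^2$; hence the $c_L$ strictly decrease for $L\geq1$. For the intersection numbers, Lemma \ref{M10} applied with first index equal to $L$ shows that the $T_L$ are non-increasing in $L$ and strictly decreasing whenever positive. Consequently the products $a_L:=c_L T_L$ are non-increasing, strictly decreasing while positive, and pairing consecutive terms gives
\[
\sum_{L\geq2}(-1)^L a_L=(a_2-a_3)+(a_4-a_5)+\cdots\geq0,
\]
so the tail $-\sum_{L\geq2}(-1)^L a_L\leq0$, establishing \eqref{eqM13}.

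For the equality statement I would track when this pairing is tight. If $T_2>0$, then Lemma \ref{M10} gives $T_3<T_2$, and together with $c_3<c_2$ this forces $a_2>a_3$, so the grouped sum is strictly positive and the inequality is strict. Hence equality requires $T_2=[\tau_2\tau_0^n]_{g,n+1}=0$, which by the definition \eqref{eqM2} happens exactly when the exponent $d_0=3g+n-4$ of $\kappa_1$ is negative, i.e. $3g+n\leq3$. Combined with the hypothesis $3g+n-2>0$ this leaves $3g+n=3$, namely $(g,n)=(0,3)$ or $(1,0)$; a direct check using $V_{0,4}=2\pi^2$, $[\tau_1\tau_0^3]_{0,4}=12$ and $V_{1,1}=\pi^2/12$, $[\tau_1]_{1,1}=1/2$ confirms equality in both cases. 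The step requiring the most care is precisely this equality analysis: one must ensure that the strict part of Lemma \ref{M10} yields a genuine gap $a_2>a_3$ (not merely $a_2\geq a_3$) whenever $T_2>0$, and that a possibly unpaired final term cannot reverse the sign — both of which hold because all $a_L$ are nonnegative and the positive ones strictly decrease.
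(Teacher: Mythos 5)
Your proof is correct and follows essentially the same route as the paper: set all $d_i=0$ in the string equation \eqref{string}, observe that the coefficients $\pi^{2L}/(2L+1)!$ decrease for $L\geq1$ and that $[\tau_L\tau_0^n]_{g,n+1}$ decreases in $L$ by Lemma \ref{M10}, then conclude by pairing the alternating tail. In fact you are more careful than the paper's own (rather terse) proof, since you explicitly carry out the equality analysis identifying $(g,n)=(0,3)$ and $(1,0)$, which the paper asserts but does not verify.
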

\begin{proof} First note that the coefficients in \eqref{string}
$$\left\{\frac{\pi^{2L}}{4(2L+1)!}\right\}_{L\geq1}$$
is a decreasing sequence.

From Lemma \ref{M10}, we know $[ \tau_{L} \prod_{i=1}^{n}
\tau_{d_{i}}]_{g,n+1}$ is a decreasing sequence in $L$.

Taking all $d_i=0$ in \eqref{string}, the left-hand side becomes
$0$. Writing down the first two terms of the right-hand side, we get
$$\frac14 V_{g,n+1}-\frac{2\pi^2}{2^4\cdot
3}[\tau_1\tau_0^n]_{g,n+1}<0,$$ which is just \eqref{eqM13}.
\end{proof}

\begin{remark}The inequality \eqref{eqM13} can also be
obtained using Mirzakhani's recursion formula \eqref{eqM3}. Let
$f(x)=\zeta(2x)(1-2^{1-2x})$, we can check that $f''(x)<0$ when
$x\geq1$. This implies that $\{a_{n+1}-a_n\}_{n\geq1}$ is a
decreasing sequence. By Mirzakhani's recursion formula \eqref{eqM3},
we have
\begin{equation}
V_{g,n+1}-[\tau_1\tau_0^n]_{g,n+1}\leq\frac{a_1-a_0}{a_1}V_{g,n+1}.
\end{equation}
Substituting $a_0=\frac{1}{2}$ and $a_1=\frac{\pi^2}{12}$, we get
$$[\tau_1\tau_0^n]_{g,n+1}\geq\frac{6}{\pi^2}V_{g,n+1}.$$

\end{remark}

\begin{corollary}
For any $g,n\geq0$, we have
\begin{equation} \label{eqM12}
V_{g,n+1}>12(2g-2+n)V_{g,n}\quad \text{ and  }\quad
V_{g,n+1}<C(2g-2+n)V_{g,n},
\end{equation}
where $C=\frac{20\pi^2}{10-\pi^2}=1513.794\ldots$.

\end{corollary}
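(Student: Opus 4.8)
The plan is to read off everything from the dilaton equation \eqref{dilaton} specialized to $d_1=\cdots=d_n=0$, which expresses $(2g-2+n)V_{g,n}$ as the alternating sum
$$(2g-2+n)V_{g,n}=\frac12\sum_{L\geq0}(-1)^L(L+1)\frac{\pi^{2L}}{(2L+3)!}[\tau_{L+1}\tau_0^n]_{g,n+1}.$$
Writing $b_L:=[\tau_L\tau_0^n]_{g,n+1}$ (so $b_0=V_{g,n+1}$), the right-hand side is $\sum_{L\geq0}(-1)^L c_L b_{L+1}$ with $c_L=\frac{(L+1)\pi^{2L}}{2(2L+3)!}$. First I would check that the product sequence $c_L b_{L+1}$ is positive and strictly decreasing: the factor $c_L$ decreases by a one-line ratio estimate (its consecutive ratio is $\frac{(L+2)\pi^2}{(L+1)(2L+4)(2L+5)}\leq\pi^2/10<1$), and $b_0>b_1>b_2>\cdots$ is exactly Lemma \ref{M10}. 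With a decreasing alternating series the standard partial-sum sandwich applies, regardless of where the (finite) sum terminates.

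For the first inequality I would keep only the leading term. Truncating after $L=0$ gives the upper estimate $(2g-2+n)V_{g,n}\leq c_0 b_1=\tfrac1{12}[\tau_1\tau_0^n]_{g,n+1}$, hence $12(2g-2+n)V_{g,n}\leq[\tau_1\tau_0^n]_{g,n+1}$. Since $[\tau_1\tau_0^n]_{g,n+1}<[\tau_0^{n+1}]_{g,n+1}=V_{g,n+1}$ by Lemma \ref{M10}, the chain gives $V_{g,n+1}>12(2g-2+n)V_{g,n}$.

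For the second inequality I would instead keep two terms. The two-term alternating lower estimate reads $(2g-2+n)V_{g,n}\geq c_0 b_1-c_1 b_2=\tfrac1{12}b_1-\tfrac{\pi^2}{120}b_2$, and because $b_2<b_1$ (again Lemma \ref{M10}, with $\tfrac1{12}>\tfrac{10-\pi^2}{120}$ covering the case $b_2=0$) this is strictly larger than $\bigl(\tfrac1{12}-\tfrac{\pi^2}{120}\bigr)b_1=\tfrac{10-\pi^2}{120}\,b_1$. Now I would invoke Lemma \ref{M11} in the form $b_1=[\tau_1\tau_0^n]_{g,n+1}\geq\tfrac{6}{\pi^2}V_{g,n+1}$ to conclude
$$(2g-2+n)V_{g,n}>\frac{10-\pi^2}{120}\cdot\frac{6}{\pi^2}\,V_{g,n+1}=\frac1{C}V_{g,n+1},\qquad C=\frac{20\pi^2}{10-\pi^2},$$
which rearranges to $V_{g,n+1}<C(2g-2+n)V_{g,n}$, the second claim.

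The routine parts are the ratio computation showing $\{c_L\}$ decreases and the bookkeeping of the alternating sandwich. The one genuinely delicate point is the choice of truncation depth: a \emph{one}-term upper truncation combined with Lemma \ref{M10} produces the clean factor $12$, whereas the sharp upper bound forces a \emph{two}-term lower truncation so that the residual $\tfrac1{12}-\tfrac{\pi^2}{120}=\tfrac{10-\pi^2}{120}$ pairs with Lemma \ref{M11} to reproduce exactly $C=\frac{20\pi^2}{10-\pi^2}$; a single-term lower bound would lose the factor $10-\pi^2$ entirely. Throughout, positivity of all $b_L$ (Mirzakhani's positivity of the bracket quantities) is what legitimizes the monotonicity inputs, and the argument runs in the stable range where $2g-2+n\geq1$.
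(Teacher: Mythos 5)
Your proposal is correct and follows essentially the same route as the paper: specialize the dilaton-type identity \eqref{dilaton} to $d_i=0$, use the decreasing coefficients together with Lemma \ref{M10} to justify the one-term upper truncation (giving the factor $12$) and the two-term lower truncation, then apply Lemma \ref{M11} to convert $[\tau_1\tau_0^n]_{g,n+1}$ into $\frac{6}{\pi^2}V_{g,n+1}$ and obtain $C=\frac{20\pi^2}{10-\pi^2}$. Your write-up is in fact slightly more careful than the paper's, since you make the alternating-series sandwich and the degenerate case $[\tau_2\tau_0^n]_{g,n+1}=0$ explicit.
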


\begin{proof} It is not difficult to see that the coefficients in
\eqref{dilaton}
$$\left\{\frac12 (L+1) \frac{\pi^{2L}}{(2L+3)!}\right\}_{L\geq0}$$
is a decreasing sequence.

Taking all $d_i=0$ in \eqref{dilaton} and keeping only the first
term in the right-hand side, we get

$$(2g-2+n)V_{g,n}\leq\frac{1}{12}[\tau_1\tau_0^n]_{g,n+1}<\frac{1}{12}V_{g,n+1},$$
which is the first inequality in \eqref{eqM12}.

If we take first two terms in the right-hand side of \eqref{dilaton}
and apply Lemma \ref{M11}, we get
\begin{align*}
(2g-2+n)V_{g,n}&\geq\frac{1}{12}[\tau_1\tau_0^n]_{g,n+1}-\frac{\pi^2}{120}[\tau_2\tau_0^n]_{g,n+1}\\
&>(\frac{1}{12}-\frac{\pi^2}{120})[\tau_1\tau_0^n]_{g,n+1}\\
&\geq\frac{10-\pi^2}{120}\cdot\frac{6}{\pi^2}V_{g,n+1}\\
&=\frac{10-\pi^2}{20\pi^2}V_{g,n+1},
\end{align*}
which is the second inequality in \eqref{eqM12}.
\end{proof}

The inequalities \eqref{eqM12} imply that
$$12\leq\liminf_{g\rightarrow\infty}\frac{V_{g,n(g)+1}}{(2g-2+n(g))V_{g,n(g)}}\leq
\limsup_{g\rightarrow\infty}\frac{V_{g,n(g)+1}}{(2g-2+n(g))V_{g,n(g)}}\leq
\frac{20\pi^2}{10-\pi^2},$$ where $n(g) \rightarrow \infty$ as $g
\rightarrow \infty$.

$$ \ \ \ \ $$

\end{document}